\newtheorem{proposition}{Proposition}
\newtheorem{theorem}{Theorem}
\newtheorem{lemma}{Lemma}
\begin{document}

\author{J. H. Lira, G. A. Wanderley}
\title{Mean curvature flow of Killing graphs}
\maketitle

\begin{abstract}
We study a Neumann problem related to the evolution of graphs under mean curvature flow in Riemannian manifolds endowed with a Killing vector field. We prove that in a particular case these graphs converge to a bounded minimal graph which contacts the cylinder over the domain  orthogonally along its boundary.
\end{abstract}

\maketitle

\section{Introduction}

 Let $M$ be a $(n+1)$-dimensional Riemannian manifold endowed with a Killing vector field $Y$. Suppose that the distribution orthogonal to $Y$ is of constant rank and integrable. Given an integral leaf $P$ of that distribution, let $\Omega\subset P$ be a bounded domain with regular boundary $\Gamma =\partial\Omega$. Let $\vartheta: \mathbb{I}\times \bar\Omega \to M$ be the flow generated by $Y$ with initial values in $M$, where $\mathbb{I}$ is a maximal interval of definition. In geometric terms, the ambient manifold is a warped product $M = P\times_{1/\sqrt{\gamma}} \mathbb{I}$ where $\gamma = 1/|Y|^2$.

Given $T\in [0,+\infty)$, let  $u:\bar\Omega\times [0,T)\to \mathbb{I}$ be a smooth function.
Fixed this notation, the Killing graph of $u(\cdot, t),$ $t\in [0,T)$,  is the hypersurface $\Sigma_t \subset M$ parametrized by the map
\[
X(t,x)=\vartheta(u(x,t),x), \quad x\in\bar\Omega.
\]
Notice that this definition could be slightly more general if we suppose that the coordinates of $x\in \bar\Omega$ change with the parameter $t\in [0,T)$.
To abolish this possibility is equivalent to rule out tangential diffeomorphisms of $\Omega$.

The Killing cylinder $K$ over $\Gamma$ is by its turn defined by
\begin{equation}
K=\{\vartheta(s,x): s\in \mathbb{I}, \, x \in \Gamma\}.
\end{equation}

Let $N$ be a unit normal vector field along $\Sigma_t$. In what follows, we denote by $H$ the mean curvature of $\Sigma_t$ with respect to the orientation given by $N$.
We are then concerned with establishing conditions for longtime existence of a prescribed mean curvature flow of the form
\begin{eqnarray}
\label{derX}
& & \frac{\partial X}{\partial t} = (nH-\mathcal{H})N,\\
\label{initial} & & X(0,\cdot)= \vartheta(u_0(\cdot),\cdot),
\end{eqnarray}
for given functions $u_0:\bar\Omega \to \mathbb{R}$ and $\mathcal{H}:\bar\Omega\to\mathbb{R}$.
In order to define boundary conditions for the evolution problem (\ref{derX}) we consider
 a function $\phi\in C^{\infty}(\Gamma)$ such that  $|\phi|\leq\phi_0<1$ for some positive constant $\phi_0$.  Let $\nu$ be the inward unit normal vector field along $K$.  We impose the following Neumann condition associated to (\ref{derX})
\begin{equation}
\label{neumann}
\langle N, \nu\rangle|_{\partial\Sigma_t} =\phi,
\end{equation}
where $\langle \cdot, \cdot \rangle$ denotes the Riemannian metric in $M$.

The main result in this paper may be stated as follows

\begin{theorem}\label{main}
There exists a unique solution $u:\bar\Omega\times [0,\infty)\to\mathbb{I}$ to the problem {\rm (\ref{derX})-(\ref{neumann})}.
Moreover, if $\phi=0$ and $\mathcal{H}=0$ the graphs $\Sigma_t$ converge to a minimal graph which contacts the cylinder $K$ orthogonally along its boundary.
\end{theorem}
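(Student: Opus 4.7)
The plan is to translate (\ref{derX})--(\ref{neumann}) into a scalar quasilinear parabolic Neumann problem for the height function $u(x,t)$ on $\bar\Omega\times[0,T)$ and then use the classical scheme: short-time existence from parabolic theory together with uniform a priori estimates yield longtime existence, after which an asymptotic analysis identifies the limit. Using that $Y$ is Killing, the mean curvature of the graph $\Sigma_t$ admits a divergence-form expression in $u$ and $\nabla u$, and projecting $\partial_t X=(nH-\mathcal{H})N$ onto the direction of $Y$ yields a quasilinear parabolic equation of the schematic form
\[
\partial_t u \;=\; c(u,\nabla u)\bigl(nH(u,\nabla u,\nabla^{2}u)-\mathcal{H}\bigr),
\]
where the coefficient $c$ is bounded above and below by positive constants whenever $|\nabla u|$ is bounded; the orthogonality condition (\ref{neumann}) translates into a nonlinear oblique boundary condition $\mathcal{B}(u,\nabla u)=\phi$ on $\Gamma$. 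Short-time existence and uniqueness of a smooth solution then follow from standard quasilinear parabolic theory for oblique boundary value problems (for instance Lieberman), so the entire issue reduces to a priori estimates up to the boundary that are uniform in $t$.

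These estimates proceed in the usual cascade. A $C^{0}$ bound for $u$ follows from the maximum principle, either by comparison with $u_0\pm Ct$ for a suitable $C$ or by using stationary Killing graphs of controlled mean curvature as barriers. The gradient estimate is the crucial step. In the interior I would apply the maximum principle to the angle function $\Theta:=\langle N,Y\rangle/|Y|$, which is comparable to $1/W$ with $W=\sqrt{1/\gamma+|\nabla u|^{2}_P}$; the evolution equation of $\Theta$ has a zeroth-order term that can be controlled in terms of the Ricci curvature of $M$ thanks to the Killing identity, yielding a positive lower bound for $\Theta$ and hence an upper bound for $W$. The \emph{main obstacle} is the boundary gradient estimate, where I would construct strictly oblique barriers in a tubular neighbourhood of $\Gamma$ built out of the distance-to-$\Gamma$ function in $P$; this is exactly where the hypothesis $|\phi|\le\phi_0<1$ enters, because strict obliqueness is what prevents degeneration of the boundary condition. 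Once $|\nabla u|$ is bounded uniformly in $t$, the equation is uniformly parabolic, and Krylov--Safonov together with Schauder estimates provide uniform $C^{2+\alpha,1+\alpha/2}$ bounds on $\bar\Omega\times[0,\infty)$, simultaneously closing the continuation argument and providing the compactness needed for the limit analysis.

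For the convergence assertion under $\phi=0$ and $\mathcal{H}=0$, the key tool is the monotonicity of area: a direct computation, using that the orthogonality condition $\phi=0$ makes the flow velocity $(nH)N$ tangent to $K$ along $\partial\Sigma_t$, shows
\[
\frac{d}{dt}\mathrm{Area}(\Sigma_t) \;=\; -\int_{\Sigma_t} n^{2}H^{2}\,d\Sigma_t.
\]
Integration gives $\int_0^\infty\!\int_{\Sigma_t}H^{2}\,d\Sigma_t\,dt<\infty$, which combined with the uniform higher-order estimates forces $H(\cdot,t_k)\to 0$ uniformly along any sequence $t_k\to\infty$. Arzel\`a--Ascoli then yields a $C^{2,\alpha}$ subsequential limit $u_\infty$ whose graph is a minimal hypersurface meeting $K$ orthogonally. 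Uniqueness of the limit, which upgrades subsequential to full convergence $u(\cdot,t)\to u_\infty$, would follow either from eventual monotonicity of $u(\cdot,t)$ in $t$ (a maximum principle argument applied to the scalar equation satisfied by $\partial_t u$) or, more robustly, from a \L ojasiewicz--Simon inequality for the area functional near $u_\infty$.
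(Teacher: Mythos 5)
Your high-level scheme coincides with the paper's (nonparametric reduction, short-time existence from Lieberman's theory, height estimate, gradient estimates, and an area/energy monotonicity for the asymptotics), but the decisive technical step --- the gradient estimate up to the boundary --- is where your proposal has a genuine gap. Barriers built from the distance to $\Gamma$ are the standard device for \emph{Dirichlet} boundary gradient estimates; under the prescribed contact angle condition (\ref{neumann}) the boundary values of $u$ are free, so there is nothing to anchor a comparison function to, and a barrier argument cannot control the tangential derivative of $u$ along $\Gamma$. (The Neumann condition itself only converts a tangential bound into a full bound, since $u_\nu=-\phi W$ and $|\phi|\le\phi_0<1$ give $W^2\le(\gamma+|\nabla' u|^2)/(1-\phi_0^2)$; the tangential bound is precisely what is missing.) What the paper actually does, following Korevaar--Guan--Huisken, is apply the maximum principle to $\eta W$ with $\eta=e^{Ku}\bigl(1+\alpha d-\phi\langle\nabla d,N\rangle\bigr)$: the correction term $-\phi\langle\nabla d,N\rangle$ is exactly what makes a boundary maximum harmless, since at such a point the computation of $(\eta W)_n$ together with the factors $1-\phi^2$ yields $0\ge\alpha+C-K\gamma/W$, hence $W\le K$ (Proposition \ref{boundary-prop}); the interior case is then closed by the long computation of $L\eta$ and $LW$ in the appendix (Proposition \ref{interior-prop}). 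Your angle-function $\Theta=\langle N,Y\rangle/|Y|$ could in principle replace the interior computation (with bounds that grow in time, which suffices for longtime existence), but without a Korevaar-type boundary term your estimate scheme does not close, and this is the heart of the theorem.

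Two further points of comparison. First, the gradient bound obtained this way is $W\le C_1e^{C_2M}$ with $M$ the height oscillation, hence \emph{not} uniform in $t$ in general; your claim of time-uniform $C^{2+\alpha}$ bounds requires first a time-uniform height bound, which in the case $\phi=0$, $\mathcal{H}=0$ the paper obtains by comparison with stationary solutions (the totally geodesic slices), cf.\ Proposition \ref{height-M} --- your remark about ``stationary Killing graphs as barriers'' for the $C^0$ bound is the right idea, but it should be flagged as the ingredient that makes the gradient bound uniform. Second, your area monotonicity $\frac{d}{dt}\mathrm{Area}(\Sigma_t)=-\int n^2H^2$ for $\phi=0$, $\mathcal{H}=0$ is essentially the paper's identity $\frac{d}{dt}\bigl(\int_\Omega W+\int_{\partial\Omega}u\phi\bigr)=-\int_\Omega u_t^2/W+\dots$, and both routes still need parabolic regularity (or, as you note, a Lojasiewicz--Simon type argument) to pass from the finiteness of the time integral to convergence of the flow; so the asymptotic part of your plan is consistent with the paper, and the real missing idea is the boundary gradient estimate.
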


Theorem \ref{main} extends Theorem 1.1 in \cite{huisken} as well as Theorem 2.4 in \cite{guan} and Theorem 2.4 in \cite{calle} in a twofold way. The corresponding theorems in \cite{huisken} and \cite{guan} concern evolution of graphs in Euclidean space whereas \cite{calle} deals with the case of graphs in Riemannian product spaces of the form $P\times\mathbb{R}$. Moreover those earlier results hold only for the case when the prescribed mean curvature is $\mathcal{H}=0$. Some related results may be also found in \cite{miquel} and \cite{gauss}.

The paper is organized as follows.
Section \ref{prelim} describes the evolution problem in nonparametric terms. Height and boundary gradient \emph{a priori} estimates for  (\ref{derX})-(\ref{neumann}) are presented respectively  in sections \ref{section-height} and \ref{section-boundary}.  Interior gradient estimates are obtained in Section \ref{section-interior}.  Some technical computations needed in the body of the proofs are collected in an appendix.

In Section \ref{section-asymptotic} we prove the following result about the asymptotic behavior of the mean curvature flow (\ref{derX}) for general $\phi$ and $\mathcal{H}$.

\begin{theorem} Suppose that there exists a solution $v\in C^\infty(\bar\Omega)$ of the elliptic Neumann problem
\begin{eqnarray}
\label{elliptic}& & {\rm div}\frac{\nabla v}{W}-\gamma\langle \bar\nabla_Y Y, \frac{\nabla v}{W} \rangle=\mathcal{H}\quad \textrm{in}\quad\, \Omega\\
\label{neumann-1}& & \langle \nu,N\rangle=\phi(x) \quad\quad \quad\quad \quad\quad\textrm{on}\quad \partial\Omega.
\end{eqnarray}
Then the mean curvature flow {\rm (\ref{derX})-(\ref{neumann})} converges to a graph with prescribed mean curvature $\mathcal{H}$ and prescribed contact angle $\phi$.
\end{theorem}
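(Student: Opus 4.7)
The plan is to compare the evolving solution $u(\cdot,t)$ with the one-parameter family of vertical translates $v+c$ of the given stationary solution. Since $Y$ is Killing, its flow acts by ambient isometries, so whenever $v$ solves (\ref{elliptic})-(\ref{neumann-1}), so does $v+c$ for every $c\in\mathbb{R}$; equivalently, each $v+c$ is a stationary solution of (\ref{derX})-(\ref{neumann}). Choose $c_-\leq c_+$ with $v+c_-\leq u_0\leq v+c_+$ on $\bar\Omega$. The comparison principle applied to the quasilinear parabolic equation of Section \ref{prelim}, with $v+c_\pm$ as sub/supersolutions sharing the oblique boundary datum $\phi$, yields $v+c_-\leq u(\cdot,t)\leq v+c_+$ for all $t\geq 0$; together with the boundary and interior gradient estimates of Sections \ref{section-boundary}-\ref{section-interior} and parabolic Schauder theory, this produces uniform bounds $\|u\|_{C^{k,\alpha}(\bar\Omega\times[0,\infty))}\leq C_k$ for every $k\geq 0$.

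\emph{Monotonicity of oscillation.} Set $M(t):=\max_{\bar\Omega}(u(\cdot,t)-v)$ and $m(t):=\min_{\bar\Omega}(u(\cdot,t)-v)$. Linearizing the mean curvature operator along the segment from $v+c$ to $u$, the difference $w:=u-v-c$ satisfies a linear uniformly parabolic equation $w_t=a^{ij}D_{ij}w+b^i D_i w$ with no zeroth-order term (since $W$ and the warping data $\gamma$, $\bar\nabla_Y Y$ depend on $\nabla u$ and on $x\in P$ but not on the vertical coordinate, by the $Y$-invariance forced by the Killing property), together with a homogeneous oblique boundary condition on $\Gamma$ (both $u$ and $v+c$ realize the contact angle $\phi$). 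The strong maximum principle and the Hopf lemma for oblique derivatives then imply that $M$ is non-increasing and $m$ non-decreasing, so both admit limits $M_\infty\geq m_\infty$.

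\emph{Identification of the limit.} Let $t_k\to\infty$. By Arzelà-Ascoli applied with the uniform estimates above, a subsequence of the time translates $u_k(x,s):=u(x,t_k+s)$ converges in $C^{2,\alpha}_{\mathrm{loc}}(\bar\Omega\times\mathbb{R})$ to a smooth eternal solution $u_\infty$ of (\ref{derX})-(\ref{neumann}) with $\max_{\bar\Omega}(u_\infty(\cdot,s)-v)\equiv M_\infty$ and $\min_{\bar\Omega}(u_\infty(\cdot,s)-v)\equiv m_\infty$ for every $s\in\mathbb{R}$. Applying the strong maximum principle and the Hopf lemma to the linear parabolic equations satisfied by $u_\infty-v-M_\infty\leq 0$ and $u_\infty-v-m_\infty\geq 0$, each of which attains zero on $\bar\Omega\times\mathbb{R}$, forces both to vanish identically. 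Thus $M_\infty=m_\infty=:c_\infty$ and $u_\infty\equiv v+c_\infty$; since the limit is independent of the subsequence, $u(\cdot,t)\to v+c_\infty$ in $C^{k,\alpha}(\bar\Omega)$ for every $k$, and the theorem follows.

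The main obstacle is the limit identification in the last step: applying the strong maximum principle and Hopf lemma to the eternal limit requires uniform ellipticity of the linearization and strict obliqueness of the boundary operator, both of which reduce to the uniform bounds on $|\nabla u|$ and on $W$ from below coming from the a priori estimates. A further subtle point is that the extrema of $w_\infty$ could a priori be attained only on $\Gamma$; the Hopf boundary-point principle for the oblique derivative is precisely what rules this out, and its validity hinges on the $Y$-invariance of the warping data $\gamma$ and $\bar\nabla_Y Y$, which keeps the lower-order coefficients $b^i$ of the linearization smooth and bounded.
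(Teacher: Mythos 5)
Your argument is correct, but it follows a genuinely different route from the paper's. You share the first step: the paper's Proposition \ref{height-M} is exactly your comparison of $u$ with the vertical translates $v+c$ (which are again solutions because $\gamma$, $\bar\nabla_YY$, $\mathcal H$ and $\phi$ depend only on $x\in P$), and this is what makes the height bound, and hence the gradient bound $W\le C$, uniform in time. After that the paper argues variationally: it differentiates the area-type functional $\int_\Omega W+\int_{\partial\Omega}u\phi$ along the flow, integrates by parts using the divergence form of the equation and the Neumann condition, and obtains $\int_0^\infty\int_\Omega u_t^2/W<\infty$, from which $u_t\to0$ and subsequential convergence to a stationary graph with data $(\mathcal H,\phi)$ follow; the limit is not identified with a specific translate of $v$. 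You instead linearize the difference $w=u-v-c$ (no zeroth-order term, homogeneous strictly oblique boundary operator thanks to $|\phi|\le\phi_0<1$), deduce monotonicity of $M(t)=\max(u-v)$ and $m(t)=\min(u-v)$, extract an eternal limit flow by compactness, and use the strong maximum principle plus the parabolic Hopf boundary lemma to force the limit to be exactly $v+c_\infty$. Your route needs more machinery at the compactness stage (time-uniform $C^{2,\alpha}$ bounds via Lieberman/Schauder, which the uniform height and gradient estimates do provide and which the paper also tacitly needs to pass from $\int u_t^2/W<\infty$ to pointwise convergence), but it buys a stronger conclusion: full, subsequence-free convergence in every $C^k$ norm to an explicitly identified limit, namely a vertical translate of the given stationary solution, rather than convergence to some unspecified stationary graph. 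Both arguments hinge on the same structural facts (invariance of the problem under the flow of $Y$ and strict obliqueness), so your proof is a legitimate, and in some respects sharper, alternative to the one in Section \ref{section-asymptotic}.
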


In a forthcoming paper the authors prove an existence result for (\ref{elliptic})-(\ref{neumann-1}). We also consider there another stationary regimen of the mean curvature flow (\ref{derX})-(\ref{neumann}), namely translating solitons.

\section{Fundamental equations}\label{prelim}

Since we will consider the mean curvature flow in nonparametric terms it seems adequate to describe all geometric invariants as well as their evolution equations in terms of graphical coordinates.

Let $x^1,\ldots, x^n$ be local coordinates in $P$. This system is augmented to be a coordinate system in $M$ by setting $x^0=s$, the flow parameter of $Y$.
The tangent space of $\Sigma_t$ at a point $X(t,x), \, x\in \bar\Omega$, is spanned by the coordinate vector fields
\begin{eqnarray}
\label{coord-graphs}
X_*\frac{\partial }{\partial x^i} =\vartheta_*\frac{\partial}{\partial x^i}+ u_i\vartheta_*\frac{\partial}{\partial x^0}= \frac{\partial}{\partial x^i}\Big|_X + u_i \frac{\partial}{\partial x^0}\Big|_X.
\end{eqnarray}
In terms of these coordinates the induced metric in $\Sigma_t$ is expressed in local components by
\begin{equation}
\label{1st}
g_{ij} = \sigma_{ij}+\frac{1}{\gamma}u_i u_j,
\end{equation}
where $\gamma = \frac{1}{|Y|^2}$ and $\sigma_{ij}$ are the local components of the metric in $P$.

In order to compute the mean curvature of $\Sigma_t$, we fix $N$ as the vector field
\begin{equation}
N = \frac{1}{W}\big(\gamma Y - \vartheta_* \nabla u\big),
\end{equation}
where $\nabla u$ is the gradient of $u$ in $P$ and
\begin{equation}
W = \sqrt{\gamma +|\nabla u|^2}.
\end{equation}
The second fundamental form of $\Sigma_t$ calculated with respect to this choice of normal vector field has local components
\begin{equation}
a_{ij} =\langle \bar\nabla_{X_*\frac{\partial}{\partial x^i}}X_*\frac{\partial}{\partial x^j}, N\rangle,
\end{equation}
where $\bar\nabla$ denotes the covariant derivative in $M$. We then compute
\begin{eqnarray*}
a_{ij} & = & \langle \bar\nabla_{X_*\frac{\partial}{\partial x^i}}\vartheta_*\frac{\partial}{\partial x^j}, N\rangle + \langle \bar\nabla_{X_*\frac{\partial}{\partial x^i}}u_j\vartheta_*\frac{\partial}{\partial x^0}, N\rangle\\
& = & \langle \bar\nabla_{\vartheta_*\frac{\partial}{\partial x^i}}\vartheta_*\frac{\partial}{\partial x^j}, N\rangle + u_i\langle\bar\nabla_{\vartheta_*\frac{\partial}{\partial x^0}}\vartheta_*\frac{\partial}{\partial x^j}, N\rangle +  u_j\langle \bar\nabla_{\vartheta_*\frac{\partial}{\partial x^i}}\vartheta_*\frac{\partial}{\partial x^0}, N\rangle\\
& & \,\,\,\, + u_{i,j}\langle\vartheta_*\frac{\partial}{\partial x^0}, N\rangle + u_i u_j \langle \bar\nabla_{\vartheta_*\frac{\partial}{\partial x^0}}\vartheta_*\frac{\partial}{\partial x^0},N\rangle.
\end{eqnarray*}
Hence using the fact that the maps $x\mapsto \vartheta(s,x)$ are isometries and that the hypersurfaces defined by $\{\vartheta(s,x): x\in P\}, \, s\in \mathbb{I},$ are totally geodesic one concludes that
\begin{eqnarray*}
& & a_{ij}  =  \langle \bar\nabla_{\frac{\partial}{\partial x^i}}\frac{\partial}{\partial x^j}, -\frac{1}{W}\nabla u\rangle + u_i\langle\bar\nabla_{\frac{\partial}{\partial x^j}}Y,\frac{1}{W} \gamma Y\rangle +  u_j\langle \bar\nabla_{\frac{\partial}{\partial x^i}}Y, \frac{1}{W}\gamma Y\rangle\\
& & \,\,\,\, + u_{i,j}\langle Y, \frac{1}{W}\gamma Y\rangle + u_i u_j \langle \bar\nabla_{Y}Y,-\frac{1}{W}\nabla u\rangle.
\end{eqnarray*}
It follows from Killing's equation that
\begin{equation}
\label{aij-gamma}
 a_{ij}  =  \frac{u_{i;j}}{W} - \frac{u_i}{W}\frac{\gamma_j}{2\gamma} - \frac{u_j}{W}\frac{\gamma_i}{2\gamma} -\frac{u_i u_j}{2W} u^k\frac{\gamma_k}{\gamma^2}.
\end{equation}
It turns out that $a_{ij}$ could be also expressed by
\begin{equation}
\label{2nd}
a_{ij} = \frac{u_{i;j}}{W} - \frac{u_i}{W} \gamma\langle \bar\nabla_Y Y, \frac{\partial}{\partial x^j}\rangle - \frac{u_j}{W} \gamma\langle \bar\nabla_Y Y, \frac{\partial}{\partial x^i}\rangle - \frac{u_i u_j}{W} \langle \bar\nabla_{Y}Y,\nabla u\rangle.
\end{equation}
Taking traces with respect to the induced metric one obtains the following expression for the mean curvature $H$ of the hypersurface $\Sigma_t$
\begin{equation}
\label{Hnonpar}
nH = \Big(\sigma^{ij}-\frac{u^i}{W}\frac{u^j}{W}\Big)\frac{u_{i;j}}{W} -\frac{2\gamma+|\nabla u|^2}{W^3}\langle \frac{\bar\nabla \gamma}{2\gamma}, \nabla u\rangle.
\end{equation}
Alternatively one has
\begin{equation}
nH = \Big(\sigma^{ij}-\frac{u^i}{W}\frac{u^j}{W}\Big)\frac{u_{i;j}}{W}-\frac{2\gamma+|\nabla u|^2}{W^3}\gamma\langle \bar\nabla_Y Y, \nabla u\rangle.
\end{equation}
At this point we recall that
\begin{equation}
\label{killing-1}
\bar\nabla_{\frac{\partial}{\partial x^i}}Y = -\frac{1}{2}\frac{\gamma_i}{\gamma}Y
\end{equation}
and
\begin{equation}
\label{killing-2}
\bar\nabla_{Y}Y = \frac{1}{2}\frac{\nabla \gamma}{\gamma^2}.
\end{equation}
what implies that
\begin{equation}
\langle \bar\nabla_Y Y, \nabla u\rangle = -\langle \bar\nabla_{\nabla u} Y, Y \rangle = \frac{1}{2\gamma^2}\langle\nabla \gamma, \nabla u\rangle.
\end{equation}
Using this one easily verifies that (\ref{Hnonpar}) may be written in divergence form as
\begin{equation}
\label{Hdiv}
\textrm{div} \frac{\nabla u}{W} -\frac{1}{2\gamma W}\langle \nabla \gamma, \nabla u\rangle = nH.
\end{equation}
In fact we have
\begin{eqnarray*}
& &\Big(\frac{u^i}{W}\Big)_{;i}  =\frac{1}{W}u^i_{;i} - \frac{1}{W^3}u^i u^j u_{i;j} -\frac{1}{2W^3}u^i \gamma_i.
\end{eqnarray*}
It is worth to point out that (\ref{Hdiv}) is equivalent to
\begin{equation}
\textrm{div} \frac{\nabla u}{W} -\frac{\gamma}{W}\langle \bar\nabla_Y Y, \nabla u\rangle = nH.
\end{equation}
We conclude that (\ref{derX}) may be written nonparametrically as
\begin{equation}
\label{uder}
\frac{\partial u}{\partial t} = W\textrm{div} \frac{\nabla u}{W} -W\mathcal{H}-\gamma\langle \bar\nabla_Y Y, \nabla u\rangle.
\end{equation}
Indeed it holds that
\begin{eqnarray*}
nH-\mathcal{H} = \langle \frac{\partial X}{\partial t}, N\rangle = \langle \frac{\partial u}{\partial t}\vartheta_*\frac{\partial }{\partial x^0}, \frac{\gamma}{W}\vartheta_*\frac{\partial}{\partial x^0}\rangle =\frac{1}{W} \frac{\partial u}{\partial t}.
\end{eqnarray*}
Using (\ref{Hnonpar}) one verifies that (\ref{uder}) is equivalent to
\begin{equation}
\label{eqtn-ut}
\frac{\partial u}{\partial t} = \Big(\sigma^{ij}-\frac{u^i}{W}\frac{u^j}{W}\Big)u_{i;j} -\frac{2\gamma+|\nabla u|^2}{W^2}\langle \frac{\bar\nabla \gamma}{2\gamma}, \nabla u\rangle
-W\mathcal{H}.
\end{equation}
We conclude that the Neumann problem (\ref{derX})-(\ref{neumann}) has the following nonparametric form
\begin{eqnarray}
\label{equation-nonpar}
& & u_t=\Big(\sigma^{ij}-\frac{u^i}{W}\frac{u^j}{W}\Big)u_{i;j}-\Big(\frac{1}{2\gamma} + \frac{1}{2W^2}\Big)\gamma^i u_i-W\mathcal{H} \quad{\rm in}\quad  \Omega\times[0,T)\\
\label{initial-nonpar}& & u(\cdot, 0)=u_0(\cdot) \quad\quad\quad\quad\quad\quad\quad\quad\quad\quad\quad\quad\quad\quad\quad\quad\quad\, {\rm in}\quad \Omega\times\{0\}
\end{eqnarray}
 with boundary condition
\begin{equation}
\label{neumann-nonpar}
\langle N,\nu\rangle =\phi \quad {\rm on} \quad  \partial\Omega\times[0,T).
\end{equation}
This boundary value problem
describes the evolution of the Killing graph of the function $u(\cdot,t)$ by its mean curvature in the direction of the unit normal $N$ with prescribed contact angle at the boundary.

The standard theory for quasilinear parabolic equations \cite{liebermann} guarantees that  the problem of solving (\ref{derX})-(\ref{neumann}) is reduced to obtaning \emph{a priori} height and gradient estimates for solutions to (\ref{equation-nonpar})-(\ref{neumann-nonpar}).

\section{Height estimates}\label{section-height}

From now on, we consider the parabolic linear operator given by
\begin{equation}
\label{linear}
\mathcal{L}v= g^{ij}v_{i;j} - \Big(\frac{1}{2\gamma}+\frac{1}{2W^2}\Big)\gamma^i v_i -\mathcal{H}\frac{u^i}{W}v_i - v_t,
\end{equation}
where $v\in C^{\infty}(\Omega\times[0,T))$.

\begin{proposition}\label{height-prop}
For a solution $u\in C^{\infty}(\bar\Omega\times[0,T^*])$, $T^*<T$, of {\rm (\ref{equation-nonpar})-(\ref{neumann-nonpar})}, it holds that
\begin{displaymath}
\max\limits_{\bar\Omega\times[0,T^*]}|u_t|=\max\limits_{\bar\Omega}|u_t(0,\cdot)|.
\end{displaymath}
Then it follows that
\begin{displaymath}
\max\limits_{\bar\Omega\times[0,T^*]}|u|\le CT^*
\end{displaymath}
for a given constant $C>0$  which depends on $T^*$.
\end{proposition}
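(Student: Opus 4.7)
\medskip
\noindent\textbf{Proof proposal.} The plan is to apply the parabolic maximum principle to $w:=u_t$, after showing that it satisfies a linear homogeneous parabolic equation on $\Omega\times(0,T^*]$ together with a linear homogeneous oblique boundary condition on $\partial\Omega\times[0,T^*]$. The first assertion then yields the second by integration in time.

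First I would differentiate the evolution equation (\ref{equation-nonpar}) in $t$. Since the coefficients $\sigma^{ij}$, $\gamma$, $\gamma^i$ and the prescribed function $\mathcal{H}$ do not depend on $t$, the chain rule applied to the $u$--dependent factors $g^{ij}=\sigma^{ij}-u^iu^j/W^2$ and $W$ produces a linear parabolic equation of the form
\[
w_t = g^{ij}w_{i;j}+ b^k w_k,
\]
with bounded coefficients $b^k$ depending on $u$ and its spatial derivatives up to order two. Collecting the terms arising from $\partial_t g^{ij}$, $\partial_t(1/W^2)$ and $\partial_t W$, I expect this identity to be exactly $\mathcal{L}w=0$, where $\mathcal{L}$ is the operator introduced in (\ref{linear}); the crucial structural feature is the absence of any zeroth-order term in $w$.

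Next I would differentiate the Neumann condition (\ref{neumann-nonpar}) in $t$. Since $\nu$ is normal to $K$ and hence perpendicular to $Y$, and because the flow maps $\vartheta(s,\cdot)$ are isometries, the condition takes the nonparametric form $\langle\nabla u,\mu\rangle=-\phi W$ on $\partial\Omega$, where $\mu$ is the inward unit normal to $\partial\Omega$ in $P$. Differentiating in $t$ (using that $\phi$ is time-independent) yields
\[
\Bigl(\mu^k+\phi\,\frac{u^k}{W}\Bigr)w_k=0 \quad\text{on}\quad\partial\Omega\times[0,T^*],
\]
whose obliqueness in the $\mu$--direction equals $1-\phi^2\geq 1-\phi_0^2>0$. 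With this uniform obliqueness in hand, the parabolic maximum principle applied to $\pm w$ rules out interior extrema for $t>0$ while Hopf's boundary-point lemma together with the homogeneous oblique condition excludes extrema on $\partial\Omega\times(0,T^*]$. Therefore the maximum and minimum of $w$ over $\bar\Omega\times[0,T^*]$ are both attained at $t=0$, which is the first claim. The height bound then follows from $|u(x,t)|\leq\|u_0\|_\infty+T^*\max_{\bar\Omega}|u_t(\cdot,0)|$, combined with the fact that $u_t(\cdot,0)$ is obtained by substituting $u_0$ into the right-hand side of (\ref{equation-nonpar}) and is thus controlled by $\|u_0\|_{C^2}$, $\mathcal{H}$ and the Killing geometry.

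The main obstacle will be the bookkeeping involved in the first step: one must verify that the first-order contributions produced by $\partial_t g^{ij}\cdot u_{i;j}$, by $\partial_t[W\mathcal{H}]$ and by $\partial_t[(1/(2\gamma)+1/(2W^2))\gamma^i u_i]$ reassemble cleanly into the coefficients of $\mathcal{L}$ in (\ref{linear}) without leaving behind a spurious zeroth-order term, which is exactly what would spoil the maximum principle. The boundary analysis is comparatively routine, its only delicate point being the uniform positivity of the obliqueness, secured by the standing hypothesis $|\phi|\leq\phi_0<1$.
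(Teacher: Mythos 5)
Your proposal is correct and follows essentially the same route as the paper: differentiate the nonparametric equation in $t$ to get a linear parabolic equation for $u_t$ with no zeroth-order term, differentiate the Neumann condition in $t$ to get a homogeneous oblique condition (whose obliqueness $1-\phi^2\ge 1-\phi_0^2>0$ is exactly what the paper verifies at the maximum point via the identity $(1-\phi^2)u_{n;t}=0$ and Hopf's lemma), and then integrate in time for the height bound. The only cosmetic difference is that the paper carries out the boundary computation pointwise at the maximum in adapted coordinates rather than quoting the oblique-derivative maximum principle, and its resulting equation for $u_t$ contains extra first-order terms beyond those in $\mathcal{L}$, which is harmless for the argument.
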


\begin{proof} First of all we verify that $u_t$ is a solution for a linear parabolic equation. Indeed one has
\begin{eqnarray*}
\mathcal{L}u_t&=&g^{ij}u_{ti;j}-\Big(\frac{1}{2\gamma} + \frac{1}{2W^2}\Big)\langle\nabla\gamma,\nabla u_t\rangle-u_{tt}\nonumber\\
&=&(g^{ij}u_{i;j})_t-g^{ij}_{;t}u_{i;j}-\Big(\frac{1}{2\gamma} + \frac{1}{2W^2}\Big)\langle\nabla\gamma,\nabla u_t\rangle-u_{tt}\nonumber\\
&=&-g^{ij}_{;t}u_{i;j}+\Big(\frac{1}{2\gamma} + \frac{1}{2W^2}\Big)_t\langle\nabla\gamma,\nabla u\rangle+\Big(\frac{1}{2\gamma} + \frac{1}{2W^2}\Big)\langle\nabla\gamma_t,\nabla u\rangle
+W_t \mathcal{H}.
\end{eqnarray*}
However since $\gamma =\gamma(x)$ in (\ref{eqtn-ut}) and $x$ is independent of $t$ it follows that
\begin{eqnarray*}
& & \Big(\frac{1}{2\gamma} + \frac{1}{2W^2}\Big)_t = \Big(\frac{1}{2\gamma}\Big)_t -\frac{1}{W^3}W_t = -\frac{1}{2W^4}(\gamma_t+2u^k u_{k;t})=-\frac{1}{W^4}u^k u_{t;k}.
\end{eqnarray*}
In the same way we have
\begin{equation}
\label{Wt-nonpar}
W_t = \frac{1}{2W}(\gamma_t +2u^k u_{k;t})= \frac{1}{W}u^k u_{t;k}.
\end{equation}
We conclude that
\begin{equation*}
\mathcal{L}u_t = -g^{ij}_{;t}u_{i;j}-\frac{1}{W^4}\langle\nabla\gamma,\nabla u\rangle u^k (u_t)_{k}+\frac{1}{W}\mathcal{H}u^k (u_t)_k.
\end{equation*}
Now using the fact that $\sigma^{ij}_{;t}=0$ and $\gamma_t =0$  we have
\begin{eqnarray*}
\mathcal{L}u_t &=&\frac{2}{W}\Big(\frac{u^i_{;t}u^j}{W}-\frac{u^i}{W}\frac{u^j}{W}W_t\Big)u_{i;j}-\frac{1}{W^4}\langle\nabla\gamma,\nabla u\rangle u^k (u_t)_{k}+\frac{1}{W}\mathcal{H}u^k (u_t)_k\\
&=&\frac{2}{W}\Big((W_i-\frac{\gamma_i}{2W})u^i_{;t}-(W_i-\frac{\gamma_i}{2W})\frac{u^i}{W}\frac{u^k}{W} u_{t;k}\Big)-\frac{1}{W^4}\langle\nabla\gamma,\nabla u\rangle u^k (u_t)_{k}+\frac{1}{W}\mathcal{H}u^k (u_t)_k\\
&=&\frac{2}{W}(W_i-\frac{\gamma_i}{2W})(\sigma^{ik}-\frac{u^i}{W}\frac{u^k}{W})u_{t;k}-\frac{1}{W^4}\langle\nabla\gamma,\nabla u\rangle u^k (u_t)_{k}+\frac{1}{W}\mathcal{H}u^k (u_t)_k.
\end{eqnarray*}
Hence it follows that
\begin{equation}
\mathcal{L}u_t -\frac{2}{W}g^{ik}(W_i-\frac{\gamma_i}{2W})(u_{t})_k+\frac{1}{W^4}\langle\nabla\gamma,\nabla u\rangle u^k (u_t)_{k}-\frac{1}{W}\mathcal{H}u^k (u_t)_k= 0.
\end{equation}
Thus  fixed $T^*\in [0,T)$ let $(x_0, t_0)$ be  a  point in $\bar\Omega\times [0,T^*]$ such that
\[
u_t (x_0, t_0) = \max\limits_{\bar\Omega\times[0,T^*]}|u_t|.
\]
Hence we choose a coordinate system adapted to the boundary $\Gamma$ in such a way that $\frac{\partial}{\partial x^n}=\nu$ at $x_0$. Then, at the point $(x_0,t_0)$ we have
\[
u_{i;t}=u_{t;i}=0
\]
for $1\leq i<n$ what implies that
\begin{eqnarray*}
W_t=\frac{1}{W}u^nu_{n;t}=-\phi(x_0)u_{n;t},
\end{eqnarray*}
where we used (\ref{neumann-nonpar}) and (\ref{Wt-nonpar}).  On the other hand, (\ref{neumann-nonpar}) implies that
\begin{eqnarray}
u_{t;n}=u_{n;t}=-(\phi W)_t=-\phi (x_0)W_t.
\end{eqnarray}
at $(x_0, t_0)$. We conclude that
\[
(1-\phi^2(x_0))u_{n;t}=0.
\]
However since $\mid\phi\mid<1$, it follows that $u_{t;n}=0$ what contradicts the parabolic Hopf Lemma \cite{liebermann}.

From this contradiction we conclude that $t_0 = 0$. Since $T^*$ is arbitrary, the conclusion follows.
\end{proof}

\section{Boundary gradient estimates}\label{section-boundary}

Now we will prove a gradient bound for a solution of (\ref{equation-nonpar})-(\ref{neumann-nonpar}) by applying a modification of the Korevaar's technique \cite{korevaar} which appeared formerly in \cite{guan}.

From now on, we consider a non-negative extension $d:\bar\Omega\to \mathbb{R}$ of the distance function $\textrm{dist}_P(\cdot,\Gamma)$ satisfying $|\nabla d|\le 1$ in $\bar\Omega$. In the same way, we consider a $C^\infty$ extension of the boundary data $\phi$ to the domain $\bar\Omega$ which we denote also by $\phi$.  Then we define
\begin{equation}
\eta=e^{Ku} h
\end{equation}
where
\begin{equation}
h= 1+ \alpha d-\phi\langle\nabla d,N\rangle,
\end{equation}
where $K$ and $\alpha$ are positive numbers to be fixed later.

\begin{proposition}\label{boundary-prop}
For $\alpha>0$ sufficiently large independent of $K$ and $t$, if for some $t\geq0$ fixed, $\eta W(\cdot,t)$ attains a local maximum value at a point $x_0\in\partial\Omega$, then $W(x_0,t)\leq K$.
\end{proposition}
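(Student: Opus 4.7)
The plan is to argue by Hopf-type reasoning: since $\eta W$ attains a local maximum at $x_0 \in \partial\Omega$, the tangential derivatives of $\eta W$ vanish at $x_0$ and the inward normal derivative satisfies $\partial_\nu(\eta W)(x_0) \leq 0$. I would compute this derivative explicitly and show it is strictly positive whenever $W(x_0,t) > K$, provided $\alpha$ is taken large enough, depending only on $\Omega$, $\gamma$, and $\|\phi\|_{C^2}$.

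At $x_0$ one has $d=0$, $\nabla d = \nu$, and $\langle\nabla d, N\rangle = \langle\nu, N\rangle = \phi$, so $h(x_0) = 1-\phi^2 \geq 1-\phi_0^2 > 0$. Since $\nu$ is horizontal, $\langle Y,\nu\rangle = 0$, and the Neumann condition reduces to $u_\nu = -\phi W$. Working in coordinates with $\partial/\partial x^n = \nu$ at $x_0$ gives $u_n = -\phi W$ and $|\nabla u|_T^2 = (1-\phi^2) W^2 - \gamma$ (from $W^2 = \gamma + |\nabla u|^2$), so $|\nabla u|_T^2$ is of order $W^2$ when $W$ is large. Two auxiliary identities then follow from the maximum property: first, the tangential vanishing $\partial_\beta(\eta W) = 0$ gives $W_\beta = -(Ku_\beta + h_\beta/h)W$; second, tangential differentiation of $u_n + \phi W = 0$ yields $u_{n;\beta} = -\phi_\beta W - \phi W_\beta$. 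Combining these and contracting with $u^\beta$, while using $|\nabla u|_T^2 = (1-\phi^2)W^2 - \gamma$, produces $u^\beta u_{n;\beta} = \phi K (1-\phi^2) W^3 + \mbox{lower order in }W$.

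Next I would expand $\partial_\nu(\eta W) = e^{Ku}\bigl[(Ku_\nu h + h_\nu)W + hW_\nu\bigr]$ and compute $WW_\nu = \tfrac{1}{2}\gamma_\nu + u^i u_{i;n}$ using the above. The computation of $h_\nu$, where one must keep track of the $W$-dependence hidden in $\langle \nabla d, N\rangle = -W^{-1}\langle \nabla d, \nabla u\rangle$, contributes an extra $\phi^2 W_\nu/W$ piece. The key phenomenon is that the $KW^2$ term $Ku_\nu hW = -K\phi(1-\phi^2)W^2$ is exactly cancelled by the $KW^2$ contributions appearing in $hW_\nu + h_\nu W$ through the $u^\beta u_{n;\beta}$ expansion, while the $u_{n;n}$ cross-terms cancel between $hW_\nu$ and $h_\nu W$. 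What survives is an inequality of the form $\alpha(1-\phi_0^2)\,W \leq C\,(W + K)$ with $C$ depending only on $\Omega$, $\gamma$, and $\|\phi\|_{C^2}$ (in particular independent of $K$, $\alpha$, and $W$). Choosing $\alpha$ so that $\alpha(1-\phi_0^2) \geq 2C$ then forces $W(x_0,t) \leq K$.

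The main obstacle is the careful covariant-derivative bookkeeping that produces these cancellations. Without them, $Ku_\nu hW$ could carry either sign depending on $\phi$, and the $W_\nu$ appearing in $hW_\nu$ separately contains a $KW^2$ piece of its own; the cancellation is precisely what singles out the Korevaar--Guan Ansatz $h = 1 + \alpha d - \phi\langle\nabla d, N\rangle$ and makes this test function effective for the boundary gradient bound.
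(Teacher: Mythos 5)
Your proposal follows essentially the same route as the paper's proof: the Korevaar--Guan test function, tangential criticality of $\eta W$ at $x_0$ combined with the tangential derivative of the Neumann relation $u_\nu=-\phi W$, the inward-normal inequality $\partial_\nu(\eta W)\leq 0$, and precisely the two cancellations you identify (the $KW^2$ terms and the $u_{n;n}$ terms), leaving $\alpha W\leq C(W+K)$ and hence $W(x_0,t)\leq K$ for $\alpha$ large independent of $K$ and $t$. One small caution: your ``lower order in $W$'' remainder in $u^\beta u_{n;\beta}$ contains the term $-\phi K\gamma W$, which carries a factor of $K$ and is exactly what produces the $CK$ on the right of your final inequality (the paper's $K\gamma/W$ term), so it must be tracked rather than absorbed into a $K$-independent constant --- as your final inequality indeed does.
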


\begin{proof} Let $t\ge 0$ be such that
\begin{displaymath}
\max\limits_{\bar\Omega}\eta W(t,\cdot)  = \eta W(t,x_0)
\end{displaymath}
for a point $x_0\in\Gamma$. Hence we choose a coordinate system adapted to $\Gamma$ such that  $\frac{\partial}{\partial x^n}=\nu$ at $x_0$ and
\begin{equation}
u_1(x_0)\geq0 \quad \textrm{and}\quad u_i(x_0)=0, \quad  \textrm{for} \quad 2\leq i\leq n-1.
\end{equation}
We have at $x_0$
\begin{eqnarray}
0=(\eta W)_1=\eta_1W+\eta W_1=e^{Ku}\big(WKu_1(1-\phi^2)-2W\phi\phi_1+W_1(1-\phi^2)\big)
\end{eqnarray}
from what follows that
\begin{eqnarray}
W_1=-Ku_1W+\frac{2\phi\phi_1}{(1-\phi^2)}W.
\end{eqnarray}
On the other hand at $x_0$ we have
\begin{eqnarray}
\eta_n&=&e^{Ku}\big(Ku_n(1-\phi^2)+\alpha-\phi\phi_n-\phi(\langle\nabla_{\nabla d}N,\nabla d\rangle+\langle N,\nabla_{\nabla d}\nabla d\rangle)\big)\nonumber\\
&=&e^{Ku}\big(Ku_n(1-\phi^2)+\alpha-\phi\phi_n-\phi(\langle\partial_n\frac{1}{W}(\gamma Y-\nabla u),\partial_n\rangle+\langle\frac{1}{W}\nabla_{\partial_n}(\gamma Y-\nabla u),\partial_n\rangle)\big)\nonumber\\
&=&e^{Ku}\big(Ku_n(1-\phi^2)+\alpha-\phi\phi_n-\frac{1}{W^2}\phi u_nW_n+\frac{1}{W}\phi u_{n;n}\big).\nonumber
\end{eqnarray}
Since $(\eta W)_n\leq 0$ at $x_0$ it holds that
\begin{eqnarray}
0&\geq& WKu_n(1-\phi^2)+\alpha W-W\phi\phi_n-\frac{1}{W}\phi u_nW_n+\phi u_{n;n}+(1-\phi^2)W_n\nonumber\\
&=&WKu_n(1-\phi^2)+\alpha W+W_n+\phi u_{n;n}+u_n\phi_n\nonumber\\
&=&WKu_n(1-\phi^2)+\alpha W+W_n+\phi u_{n;n}-W\phi\phi_n.\nonumber
\end{eqnarray}
On the other hand
\begin{eqnarray}
& & W_n=\frac{\gamma_n}{2W}+\frac{1}{W}(u_1u_{1;n}+u_nu_{n;n})=\frac{\gamma_n}{2W}-\frac{1}{W}\phi u_1W_1-\phi_1u_1-\phi u_{n;n}
\end{eqnarray}
what implies that
\begin{eqnarray}
W_n&=&\frac{\gamma_n}{2W}-\frac{1}{W}\phi u_1\Big(\frac{2\phi\phi_1 W}{1-\phi^2}-Ku_1W\Big)-\phi_1u_1-\phi u_{n;n}\nonumber\\
&=&\frac{\gamma_n}{2W}-\frac{1+\phi^2}{1-\phi^2}u_1\phi_1+K\phi u_1^2-\phi u_{n;n}.\nonumber
\end{eqnarray}
Therefore since
\begin{eqnarray}
u_1^2=|\nabla u|^2-u_n^2=W^2-\gamma-\phi^2W^2=W^2(1-\phi^2)-\gamma\nonumber
\end{eqnarray}
we conclude that
\begin{eqnarray}
0&\geq& \alpha+\frac{\gamma_n}{2W^2}-\frac{1+\phi^2}{1-\phi^2}\frac{u_1}{W}\phi_1+\frac{K\phi u_1^2}{W}-\phi\phi_n+Ku_n(1-\phi^2)\nonumber\\
&=&\alpha+\frac{\gamma_n}{2W^2}+\frac{1+\phi^2}{1-\phi^2}N_1\phi_1+K\phi\Big(W(1-\phi^2)-\frac{\gamma}{W}\Big)-\phi\phi_n-K\phi W(1-\phi^2)\nonumber\\
&=&\alpha+\frac{\gamma_n}{2W^2}+\frac{1+\phi^2}{1-\phi^2}N_1\phi_1-\frac{K\phi \gamma}{W}-\phi\phi_n\nonumber\\
&\geq&\alpha+C-\frac{K\gamma}{W},\nonumber
\end{eqnarray}
for a given constant $C$ depending solely on $\gamma$ and $\phi$. It follows that $W(x_0,t)\leq K$ if $\alpha$ is chosen large enough  and independent of $K$ and $t$.
\end{proof}

\section{Interior gradient estimates}\label{section-interior}

In this section we deduce a global gradient bound using the techniques in \cite{calle} and \cite{guan}. However the more general context of warped product gives rise to a long list of additional terms which require a careful tracking along the calculations.

In the sequel, we consider the parabolic linear operator given by
\begin{equation}
\label{linear}
Lv= g^{ij}v_{i;j} - \Big(\frac{1}{2\gamma}+\frac{1}{2W^2}\Big)\gamma^i v_i - v_t,
\end{equation}
where $v\in C^{\infty}(\Omega\times[0,T))$.

\begin{proposition}\label{interior-prop} For fixed $T^*<T$ there exists $K>0$ sufficiently large so that if
\[
\eta W(x_0,t_0)=\max\limits_{\bar{\Omega}\times [0,T^*]}\eta W
\]
for some $(x_0,t_0)\in\bar{\Omega}\times [0,T^*]$, then $W(x_0,t_0)\leq C$, for some constant $C$.
\end{proposition}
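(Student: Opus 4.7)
The plan is to apply the parabolic maximum principle to the auxiliary function $\eta W$ on the parabolic cylinder $\bar\Omega \times [0, T^*]$. By continuity and compactness, $\eta W$ attains its maximum at some point $(x_0, t_0)$. If $t_0 = 0$, the bound follows immediately from the initial data $u_0$; if $x_0 \in \partial\Omega$, Proposition \ref{boundary-prop}, applied with $\alpha$ fixed once and for all independently of $K$, gives $W(x_0, t_0) \leq K$. The only case requiring additional work is $(x_0, t_0) \in \Omega \times (0, T^*]$, and the remainder of the argument is devoted to it.

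In this interior case, the critical point and Hessian conditions at the maximum yield
$\nabla(\eta W)(x_0, t_0) = 0$ and $L(\eta W)(x_0, t_0) \leq 0$. Expanding via the product rule,
\begin{equation*}
L(\eta W) = \eta\, LW + W\, L\eta + 2 g^{ij} \eta_i W_j,
\end{equation*}
and substituting the critical relation $\nabla W = -(W/\eta)\nabla \eta$, division by $\eta W > 0$ yields the pointwise inequality
\begin{equation*}
\frac{LW}{W} + \frac{L\eta}{\eta} \leq \frac{2|\nabla \eta|^2_g}{\eta^2}.
\end{equation*}
I would then compute $L\eta/\eta$ explicitly. Since $\eta = e^{Ku} h$ and $Lu = W\mathcal{H}$ by (\ref{equation-nonpar}), a direct expansion gives
\begin{equation*}
\frac{L\eta}{\eta} = K^2 |\nabla u|^2_g + K W \mathcal{H} + \frac{2K}{h} g^{ij} u_i h_j + \frac{L h}{h}.
\end{equation*}
The key observation is that $|\nabla u|^2_g = g^{ij} u_i u_j = \gamma |\nabla u|^2/W^2 \leq \gamma$ is uniformly bounded independently of $W$; the right-hand side $2|\nabla \eta|^2_g/\eta^2$ is therefore of order $O(K^2)$, while the $h$-dependent contributions are controlled uniformly in terms of the geometry of $\Omega$, of $\phi$, and of $\gamma$.

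The principal obstacle is the derivation of a sufficiently strong lower bound for $LW$. Starting from $W^2 = \gamma + |\nabla u|^2$, one differentiates twice, commutes covariant derivatives using the Ricci identity on $P$, and substitutes the nonparametric flow equation; the identities collected in the appendix are precisely what make this tracking manageable in the Killing/warped-product setting, where numerous correction terms involving $\bar\nabla_Y Y$, $\nabla \gamma$, and their covariant derivatives intervene. The expected outcome is an estimate of schematic form
\begin{equation*}
LW \geq c_0 W |A|^2 - C_1 W - C_2,
\end{equation*}
where $|A|^2$ is the squared norm of the second fundamental form of $\Sigma_t$ and $c_0, C_1, C_2$ depend only on the ambient geometry and on $\|\mathcal{H}\|_\infty$. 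Substituting into the maximum-principle inequality, discarding the nonnegative curvature contribution $c_0 W|A|^2$, and choosing $K$ large enough to dominate the sign-indefinite term $KW\mathcal{H}$, the inequality reduces to a linear-in-$W$ relation of the form $c_1 W(x_0, t_0) \leq C K^2 + C'$, from which the claimed bound $W(x_0, t_0) \leq C$ follows at once. The delicate work is thus entirely in the warped-product bookkeeping needed to obtain the lower bound for $LW$; the max-principle reduction itself is routine.
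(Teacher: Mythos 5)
Your setup (the test function $\eta W$, reduction to an interior space--time maximum, the identity $L(\eta W)=\eta LW+WL\eta+2g^{ij}\eta_iW_j$, and the expansion of $L\eta/\eta$) coincides with the paper's, but the mechanism you propose for extracting the bound on $W$ does not close, and it is not the paper's. First, you discard the only coercive term. In the inequality $\frac{LW}{W}+\frac{L\eta}{\eta}\le \frac{2}{\eta^2}|\nabla\eta|^2_g$, the term $K^2g^{ij}u_iu_j=K^2\gamma|\nabla u|^2/W^2$ is not $O(K^2)$ junk: it equals $K^2\gamma(1-\gamma/W^2)$, and the entire proof consists in showing that every other contribution is of order $K$ or decays in $W$, so that after multiplying by $W^2$ one obtains $\big(K^2\gamma-CK-\cdots\big)W^2\le C(K^2+\cdots)$ and hence $W^2\le C$ for $K$ (and $\alpha$) large. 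Once you bound $K^2|\nabla u|^2_g\le K^2\gamma$ and lump it with the right-hand side, nothing in your accounting grows in $W$ with a definite sign: your claimed final relation $c_1W\le CK^2+C'$ has no source for its left-hand side. The only linear-in-$W$ term you retain is $KW\mathcal{H}$, which is sign-indefinite, vanishes when $\mathcal{H}=0$ (precisely the case needed for the convergence part of Theorem \ref{main}), and in any case cannot be ``dominated by choosing $K$ large'' since it scales with $K$. In the paper this term cancels exactly against the $-KW\mathcal{H}$ produced by the time derivative, via $-W_t/W=Ku_t+h_t/h$ with $u_t=W(nH-\mathcal{H})$ --- a step you omit entirely, since you never treat the $-W_t$ term inside $LW$.

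Second, your schematic bound $LW\ge c_0W|A|^2-C_1W-C_2$, with constants depending only on the geometry and $\|\mathcal{H}\|_\infty$, is not obtainable by direct computation. Lemma \ref{LW-lemma} shows that the correct quantity to estimate is $LW-\frac{2}{W}g^{ij}W_iW_j$ (the subtracted gradient term is what cancels the $W^3\langle AY^T,AY^T\rangle$ contributions), and even this expression contains $nHW^3\langle AY^T,Y^T\rangle$, which is of size $|A|W^3$ up to bounded factors (note $nH=\mathcal{H}+u_t/W$ is bounded by Proposition \ref{height-prop}); absorbing such a term against $|A|^2$ by Young's inequality leaves an error growing like a positive power of $W$ that overwhelms the $K^2$ term and kills the estimate. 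The paper controls it only by invoking the critical-point relation $\nabla(\eta W)=0$ a second time, which converts $W^2\langle AY^T,Y^T\rangle$ into $-K|\nabla u|^2/W$ plus controlled quantities; the resulting $-KnH|\nabla u|^2/W$ then combines with the $KnHW$ coming from the time derivative to give the harmless $KnH\gamma/W$. These exact cancellations, together with keeping $K^2\gamma|\nabla u|^2/W^2$ as the good term and grouping the remainder into an $|A|$-quadratic piece $\mathcal{A}$ and a bounded piece $\mathcal{B}$, are the substance of the proof; without them your argument cannot produce any bound on $W(x_0,t_0)$.
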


\begin{proof} We can assume $x_0\in\Omega$ and $t_0>0$. At a point $(x_0,t_0)$ where $\eta W$ attains maximum value we have
\begin{eqnarray}
\eta_i W+\eta W_i=0
\end{eqnarray}
and
\begin{equation}
\frac{1}{\eta}L\eta+\frac{1}{W}\Big(LW-\frac{2}{W}g^{ij}W_iW_j\Big)\leq 0.
\end{equation}
We conclude that
\begin{eqnarray*}
\frac{1}{\eta}L\eta  &=& KLu + \frac{1}{h}Lh +K^2 g^{ij}u_iu_j+2Kg^{ij}u_i\frac{h_j}{h}\\
 &=& K\mathcal{H}W+ \frac{1}{h}Lh +K^2 \frac{\gamma|\nabla u|^2}{W^2}+2Kg^{ij}u_i\frac{h_j}{h}.
\end{eqnarray*}
Now we have
\begin{eqnarray*}
g^{ij}u_i h_j = \frac{\gamma}{W^2} u^j h_j = -\frac{\gamma}{W}(\alpha \langle N, \nabla d\rangle -\langle N, \nabla \phi\rangle \theta - \phi\langle N, \nabla \theta\rangle).
\end{eqnarray*}
However
\begin{eqnarray*}
 \langle N, \nabla \theta \rangle = W\langle AY^T, \nabla^\Sigma d\rangle +\langle \nabla_{\frac{\nabla u}{W}} \nabla d, \frac{\nabla u}{W}\rangle -\kappa\frac{|\nabla u|^2}{W^2}.
\end{eqnarray*}
Therefore
\begin{eqnarray*}
g^{ij}u_i h_j = -\alpha\frac{\gamma}{W}\langle N, \nabla d\rangle +\frac{\gamma}{W}\langle N, \nabla \phi\rangle \theta +\gamma \phi \langle AY^T, \nabla^\Sigma d\rangle +\frac{\gamma}{W} \phi\langle \nabla_{\frac{\nabla u}{W}} \nabla d, \frac{\nabla u}{W}\rangle -\gamma \phi\kappa\frac{|\nabla u|^2}{W^3}.
\end{eqnarray*}
Thus the expression for $Lh$ in Appendix allows us to conclude that
\begin{eqnarray*}
& & \frac{1}{\eta}L\eta = K\mathcal{H}W + K^2 \frac{\gamma|\nabla u|^2}{W^2} \\
& & \,\,\,\,+\frac{2K}{h}\big(-\alpha\frac{\gamma}{W}\langle N, \nabla d\rangle +\frac{\gamma}{W}\langle N, \nabla \phi\rangle \theta +\gamma \phi \langle AY^T, \nabla^\Sigma d\rangle +\frac{\gamma}{W} \phi\langle \nabla_{\frac{\nabla u}{W}} \nabla d, \frac{\nabla u}{W}\rangle -\gamma \phi\kappa\frac{|\nabla u|^2}{W^3}\big)\\
& & \,\,\,\,  +\frac{1}{h}|A|^2 \phi\theta + n\frac{1}{h}\phi HW\langle AY^T, \nabla^\Sigma d\rangle
+\frac{1}{h}\big(\kappa\gamma-\frac{1}{2}\langle \nabla d, \nabla \gamma\rangle\big)\phi\langle AY^T, Y^T\rangle\\
& &\,\,\,\, +\frac{2}{h}\langle A\nabla^\Sigma d, \nabla^\Sigma\phi\rangle +\frac{2}{h}\langle A, \nabla^2 d\rangle_\Sigma\phi-\frac{1}{hW^2}\phi\langle A\nabla^\Sigma d, X_*\nabla\gamma\rangle\\
& & \,\,\,\,\ + \frac{1}{h}\big(nH-\mathcal{H}\big)\big(\langle N, \nabla \phi\rangle \theta-\alpha\theta-\frac{1}{2W^2}\langle \nabla\gamma, \nabla d\rangle\phi+\langle \nabla_{\frac{\nabla u}{W}}\nabla d, \frac{\nabla u}{W}\rangle\phi\big)-n\frac{1}{h}\kappa H\phi\\
& & \,\,\,\,-n\frac{\alpha}{h} H_d +\frac{1}{h}\big(2\langle N, \nabla\phi\rangle -\alpha\big)\langle \nabla_{\frac{\nabla u}{W}}\nabla d, \frac{\nabla u}{W}\rangle +\frac{2}{h}\langle \nabla_{\frac{\nabla u}{W}}\nabla d, \nabla \phi\rangle
\\
& &\,\,\,\,-\frac{1}{h}\phi \langle \nabla_{\frac{\nabla u}{W}}\nabla d, \frac{\nabla \gamma}{2\gamma}\rangle +\frac{1}{h}\phi\langle \nabla^\Sigma \mathcal{H}, \bar\nabla d\rangle+n\frac{1}{h}\langle \nabla H_d, N\rangle \phi-\frac{1}{h}\phi\nabla^3 d(\frac{\nabla u}{W},\frac{\nabla u}{W},\frac{\nabla u}{W}) \\
& & \,\,\,\,+\frac{1}{h}\textrm{Ric}(\nabla d, \frac{\nabla u}{W})\phi+\frac{\gamma}{hW^2}\langle N, \nabla \kappa\rangle\phi
-\frac{1}{h}\big(\frac{1}{2\gamma}+\frac{1}{2W^2}\big)\alpha\langle\nabla d, \nabla \gamma\rangle  +\frac{1}{h}\big(\frac{1}{2\gamma}+\frac{1}{2W^2}\big)\langle \nabla\phi,\nabla \gamma\rangle \theta \\
& & \,\,\,\, -\kappa\frac{1}{h}\phi \langle N, \frac{\nabla \gamma}{2\gamma}\rangle+\frac{2}{h}\kappa\frac{\gamma}{W^2}\langle \nabla\phi, N\rangle-\frac{1}{h}\big(\Delta \phi -\langle \nabla_{\frac{\nabla u}{W}}\nabla \phi, \frac{\nabla u}{W}\rangle\big)\theta.
\end{eqnarray*}
On the other hand Lemma \ref{LW-lemma} yields
\begin{eqnarray*}
& & \frac{1}{W}\Big(LW-\frac{2}{W}g^{ij}W_i W_j\Big) = |A|^2 +nHW^2\langle AY^T,  Y^T\rangle -nHW^2\langle \frac{\nabla \gamma}{2\gamma^2}, N\rangle\\
& & \,\,\,\, -3\frac{1}{W}\gamma\langle A Y^T, X_*\frac{\nabla\gamma}{2\gamma}\rangle  + g^{ij}\frac{\gamma_{i;j}}{2\gamma} - \frac{3}{4}\frac{|\nabla\gamma|^2}{4\gamma^2}  -\frac{1}{4}\langle \frac{\nabla \gamma}{2\gamma}, N\rangle^2 + \gamma \langle \bar\nabla_{N}\frac{\bar\nabla \gamma}{2\gamma^2}, N\rangle\\
& &\,\,\,\, - \langle \nabla^\Sigma \mathcal{H}, N\rangle - \frac{|\nabla \gamma|^2}{4\gamma}\frac{1}{W^2}-\frac{W_t}{W}.
\end{eqnarray*}
Now we use the fact that $x_0$ is a critical point to $\eta W$. We have
\[
e^{Ku}(Ku_i h + h_i)W=-e^{Ku}hW_i.
\]
what implies that
\[
-KW^2 h N_i N^i+ Wh_i N^i = -hW_i N^i
\]
and then
\[
-Kh |\nabla u|^2+ Wh_i N^i = -hW_i N^i.
\]
However
\begin{eqnarray*}
& & W_i N^i =\frac{\gamma_i}{2\gamma}N^i W+N_i N^iW^3\langle \frac{\nabla \gamma}{2\gamma^2}, N\rangle +W^2\langle A Y^T, N^iX_*\frac{\partial}{\partial x^i}\rangle\\
& & \,\,=\frac{1}{2\gamma}\langle \nabla \gamma, N\rangle W+|\nabla u|^2 W\langle \frac{\nabla \gamma}{2\gamma^2}, N\rangle -W^3\langle A Y^T, Y^T\rangle
\end{eqnarray*}
and
\begin{eqnarray*}
& & h_i N^i = \alpha \theta -\langle \nabla\phi, N\rangle  \theta +\phi a_i^j N^i d_j -\phi (d_{i;j}N^i N^j-\kappa \sigma_{ij})N^iN^j\\
& & \,\,= \alpha \theta -\langle \nabla\phi, N\rangle  \theta -\phi W\langle AY^T, \nabla^\Sigma d\rangle -\phi\langle \nabla_{\frac{\nabla u}{W}}\nabla d, \frac{\nabla u}{W}\rangle +\phi\kappa \frac{|\nabla u|^2}{W^2}.
\end{eqnarray*}
We then conclude that
\begin{eqnarray*}
& &-K \frac{|\nabla u|^2}{W} +\frac{\alpha \theta}{h} -\frac{1}{h}\langle \nabla\phi, N\rangle  \theta -\frac{\phi}{h} W\langle AY^T, \nabla^\Sigma d\rangle -\frac{\phi}{h}\langle \nabla_{\frac{\nabla u}{W}}\nabla d, \frac{\nabla u}{W}\rangle +\frac{\phi}{h}\kappa \frac{|\nabla u|^2}{W^2}\\
& &\,\, =-\frac{1}{2\gamma}\langle \nabla \gamma, N\rangle -|\nabla u|^2 \langle \frac{\nabla \gamma}{2\gamma^2}, N\rangle +W^2\langle A Y^T, Y^T\rangle
\end{eqnarray*}
Moreover
\begin{eqnarray*}
 -\frac{W_t}{W} &=&\frac{\eta_t}{\eta}=Ku_t+\frac{h_t}{h}=WK(nH-\mathcal{H})+\frac{h_t}{h}\nonumber\\
&=&nHKW-KW\mathcal{H}-\frac{1}{h}(nH-\mathcal{H})\Big(\langle\nabla\phi,N\rangle\theta - \alpha\theta  - \frac{\phi}{2W^2}\langle\nabla\gamma,\nabla d\rangle\Big) \nonumber\\
& &\,\,\,\, +\frac{\phi}{h}\langle\frac{\nabla u}{W},\nabla_{\frac{\nabla u}{W}}\nabla d\rangle.\nonumber
\end{eqnarray*}
Then we have
\begin{eqnarray*}
& & \frac{1}{W}\Big(LW-\frac{2}{W}g^{ij}W_i W_j\Big) = |A|^2 +KnH \frac{\gamma}{W} +\frac{\alpha \theta}{h}nH -\frac{1}{h}nH\langle \nabla\phi, N\rangle  \theta  -\frac{\phi}{h} nHW\langle AY^T, \nabla^\Sigma d\rangle \nonumber\\
& &- \frac{1}{h}(nH-\mathcal{H})\Big(\langle\nabla\phi,N\rangle\theta - \alpha\theta  - \frac{\phi}{2W^2}\langle\nabla\gamma,\nabla d\rangle\Big)-\frac{\phi}{h}nH\langle \nabla_{\frac{\nabla u}{W}}\nabla d, \frac{\nabla u}{W}\rangle +\frac{\phi}{h}nH\kappa \frac{|\nabla u|^2}{W^2} \\
& &-3\frac{1}{W}\gamma\langle A Y^T, X_*\frac{\nabla\gamma}{2\gamma}\rangle
+ g^{ij}\frac{\gamma_{i;j}}{2\gamma} - \frac{3}{4}\frac{|\nabla\gamma|^2}{4\gamma^2}  -\frac{1}{4}\langle \frac{\nabla \gamma}{2\gamma}, N\rangle^2 + \gamma \langle \bar\nabla_{N}\frac{\bar\nabla \gamma}{2\gamma^2}, N\rangle - \langle \nabla^\Sigma \mathcal{H}, N\rangle \\
& &- \frac{|\nabla \gamma|^2}{4\gamma}\frac{1}{W^2}-KW\mathcal{H}  +\frac{\phi}{h}\langle\frac{\nabla u}{W},\nabla_{\frac{\nabla u}{W}}\nabla d\rangle.\nonumber
\end{eqnarray*}
We conclude that
\begin{eqnarray*}
& & \frac{1}{\eta}L\eta +\frac{1}{W}\Big(LW-\frac{2}{W}g^{ij}W_i W_j\Big) =  K^2 \frac{\gamma|\nabla u|^2}{W^2} +\mathcal{A}+\mathcal{B},
\end{eqnarray*}
where
\begin{eqnarray*}
& & \mathcal{A} = \Big(1+\frac{\phi\theta}{h}\Big)|A|^2+\frac{2K}{h}\gamma \phi \langle AY^T, \nabla^\Sigma d\rangle+\frac{\phi}{h}\big(\kappa\gamma-\frac{1}{2}\langle \nabla d, \nabla \gamma\rangle\big)\langle AY^T, Y^T\rangle\\
& & \,\,\,\,
 +\frac{2}{h}\langle A\nabla^\Sigma d, \nabla^\Sigma\phi\rangle +\frac{2}{h}\langle A, \nabla^2 d\rangle_\Sigma\phi-\frac{1}{hW^2}\phi\langle A\nabla^\Sigma d, X_*\nabla\gamma\rangle\\
& &  \,\,\,\,+KnH \frac{\gamma}{W} +\frac{\alpha \theta}{h}nH -\frac{1}{h}nH\langle \nabla\phi, N\rangle  \theta -\frac{\phi}{h}nH\kappa \frac{\gamma}{W^2} -3\frac{1}{W}\gamma\langle A Y^T, X_*\frac{\nabla\gamma}{2\gamma}\rangle
\end{eqnarray*}
and
\begin{eqnarray*}
& & \mathcal{B}=\frac{2K}{h}\big(-\alpha\frac{\gamma}{W}\langle N, \nabla d\rangle +\frac{\gamma}{W}\langle N, \nabla \phi\rangle \theta  +\frac{\gamma}{W} \phi\langle \nabla_{\frac{\nabla u}{W}} \nabla d, \frac{\nabla u}{W}\rangle -\gamma \phi\kappa\frac{|\nabla u|^2}{W^3}\big) \\
& & \,\,-\mathcal{H}\langle \nabla_{\frac{\nabla u}{W}}\nabla d, \frac{\nabla u}{W}\rangle\phi-n\frac{\alpha}{h} H_d +\frac{1}{h}\big(2\langle N, \nabla\phi\rangle -\alpha\big)\langle \nabla_{\frac{\nabla u}{W}}\nabla d, \frac{\nabla u}{W}\rangle +\frac{2}{h}\langle \nabla_{\frac{\nabla u}{W}}\nabla d, \nabla \phi\rangle
\\
& &\,\,-\frac{1}{h}\phi \langle \nabla_{\frac{\nabla u}{W}}\nabla d, \frac{\nabla \gamma}{2\gamma}\rangle +\frac{1}{h}\phi\langle \nabla^\Sigma \mathcal{H}, \bar\nabla d\rangle+n\frac{1}{h}\langle \nabla H_d, N\rangle \phi-\frac{1}{h}\phi\nabla^3 d(\frac{\nabla u}{W},\frac{\nabla u}{W},\frac{\nabla u}{W}) \\
& & \,\,+\frac{1}{h}\textrm{Ric}(\nabla d, \frac{\nabla u}{W})\phi+\frac{\gamma}{hW^2}\langle N, \nabla \kappa\rangle\phi
-\frac{1}{h}\big(\frac{1}{2\gamma}+\frac{1}{2W^2}\big)\alpha\langle\nabla d, \nabla \gamma\rangle   \\
& & \,\, +\frac{1}{h}\big(\frac{1}{2\gamma}+\frac{1}{2W^2}\big)\langle \nabla\phi,\nabla \gamma\rangle \theta-\kappa\frac{1}{h}\phi \langle N, \frac{\nabla \gamma}{2\gamma}\rangle+\frac{2}{h}\kappa\frac{\gamma}{W^2}\langle \nabla\phi, N\rangle \\
& &\,\,-\frac{1}{h}\big(\Delta \phi -\langle \nabla_{\frac{\nabla u}{W}}\nabla \phi, \frac{\nabla u}{W}\rangle\big)\theta+ g^{ij}\frac{\gamma_{i;j}}{2\gamma} - \frac{3}{4}\frac{|\nabla\gamma|^2}{4\gamma^2}  -\frac{1}{4}\langle \frac{\nabla \gamma}{2\gamma}, N\rangle^2 + \gamma \langle \bar\nabla_{N}\frac{\bar\nabla \gamma}{2\gamma^2}, N\rangle \\
& &\,\, - \langle \nabla^\Sigma \mathcal{H}, N\rangle- \frac{|\nabla \gamma|^2}{4\gamma}\frac{1}{W^2}  +\frac{\phi}{h}\langle\frac{\nabla u}{W},\nabla_{\frac{\nabla u}{W}}\nabla d\rangle.\nonumber
\end{eqnarray*}
However using some standard inequalites we obtain
\begin{eqnarray*}
& & \mathcal{A} \ge  \Big(1+\frac{\phi\theta}{h}\Big)|A|^2-\Big(\frac{2K\gamma}{h\sqrt{\gamma}}+\frac{\kappa}{h}+\frac{1}{2h\gamma}
|\nabla\gamma|+\frac{2}{h}|\nabla\phi|+\frac{2}{h}|\nabla^2d|_{\Sigma}\\
& &+\frac{1}{hW^2}|X_*\nabla\gamma|+\frac{K\gamma\sqrt{n}}{W}+\frac{\alpha\theta\sqrt{n}}{h}+\frac{\theta\sqrt{n}}{h}|\nabla\phi|+
\frac{\gamma\sqrt{n}\kappa}{hW^2}+\frac{3\gamma}{\sqrt{\gamma}W}|X_*\frac{\nabla\gamma}{2\gamma}|\Big)|A|\\
\end{eqnarray*}
Using that $W^2\ge \gamma$ and choosing $\alpha$ sufficiently large and depending only on $n, \gamma, \phi$ and $\kappa$  we have
\begin{eqnarray*}
& & \mathcal{A}  \geq \frac{1}{2}|A|^2-\Big(\epsilon+2\sqrt{\gamma}\frac{K}{h}+\frac{K\gamma\sqrt{n}}{W}+\frac{3\sqrt{\gamma}}{W}
|X_*\frac{\nabla\gamma}{2\gamma}|\Big)|A|\\
& & \,\,\geq-\Big(\epsilon+2\sqrt{\gamma}\frac{K}{h}+\frac{K\gamma\sqrt{n}}{W}+\frac{3\sqrt{\gamma}}{W}
|X_*\frac{\nabla\gamma}{2\gamma}|\Big)^2.
\end{eqnarray*}
Moreover
\begin{eqnarray*}
\mathcal{B} \geq -C \bigg(1+\frac{\alpha}{h}+\frac{\alpha}{hW^2}+\frac{1}{h}+\frac{1}{W^2}+\frac{1}{hW^2}+K\frac{\alpha}{h}+\frac{K}{h}\bigg),
\end{eqnarray*}
where $C$ is a constant depending on $n, \gamma, \phi, d, \kappa$ and $\mathcal{H}$.

Hence we obtain
\begin{eqnarray*}
& & \frac{1}{\eta}L\eta +\frac{1}{W}\Big(\mathcal{L}W-\frac{2}{W}g^{ij}W_i W_j\Big) \geq   K^2 \frac{\gamma|\nabla u|^2}{W^2}-C(\epsilon)-\frac{K}{W}C(\epsilon,\gamma,n)-\frac{K^2}{W^2}C(\gamma,n)\nonumber\\
& & \,\,\,\, -\frac{1}{W}C(\gamma,\epsilon)-\frac{K}{W^2}C(\gamma,n)-\frac{K^2}{h^2}C(\gamma)-\frac{K^2}{hW}C(\gamma, n)-\frac{1}{W^2}C(\gamma)-\frac{K}{hW}C(\gamma)
\\
& &\,\,\,\,-K\frac{\alpha}{h}C-\frac{K}{h}C(\epsilon, \gamma)-C-\frac{\alpha}{h}C-\frac{\alpha}{hW^2}C -\frac{1}{h}C-\frac{1}{W^2}C-\frac{1}{hW^2}C.
\end{eqnarray*}
Then
\begin{eqnarray}
& & - K^2 \frac{\gamma|\nabla u|^2}{W^2} \geq -C\bigg(\frac{K^2}{W^2}+\frac{K}{W^2}+\frac{1}{W}+\frac{K}{hW}+\frac{K^2}{h^2}+\frac{\alpha}{hW^2}+\frac{1}{W^2}+\frac{1}{hW^2}+\frac{K}{W}+\frac{1}{W}\nonumber\\
& & \,\,\,\, + K\frac{\alpha}{h}+\frac{K}{h}+\frac{\alpha}{h}+\frac{1}{h}+1\bigg).\nonumber
\end{eqnarray}
It follows that
\begin{eqnarray*}
& & \bigg(K^2\gamma-
\Big(\frac{K^2}{h^2}+K\frac{\alpha}{h}+\frac{K}{h}+\frac{1+\alpha}{h}+1\Big)C\bigg)W^2\leq \Big(K^2+K+\frac{1+\alpha}{h}+1\Big)C\\
& & \,\,\,\,+(K+\frac{K}{h}+1)CW.\nonumber
\end{eqnarray*}
Now suppose that $W(x_0,t_0)\ge 1$. Otherwise we are done. In this case we have $W \leq W^2$ and absorbing the terms with $W$ into that one with $W^2$ transforms the inequality above into
\begin{eqnarray*}
\Big(K^2\gamma-\frac{K^2}{h^2}C-\frac{K}{h}C-KC-C-
\frac{1}{h}(\alpha+1)(K+1)C\Big)W^2\leq \Big(K^2+K+1+\frac{1}{h}(\alpha+1)\Big)C.
\end{eqnarray*}
If $d_0 = d(x_0)$ then choosing $\alpha \ge 1/(C(d_0)d_0-1)$ for some constant $C(d_0)>1/d_0$ we obtain
$(1+\alpha)/h\le C(d_0)$ what implies that
\begin{eqnarray}
& & \Big(K^2\gamma-\frac{K^2}{h^2}C-\frac{K}{h}C-KC(d_0)-
C(d_0)\Big)W^2\leq (K^2+K+C(d_0))C.\nonumber
\end{eqnarray}
Then for $\alpha >\frac{1}{d_0}\max\{1,\sqrt{2C/\gamma}\}$ we have
\begin{eqnarray}
& & \Big(K^2\frac{\gamma}{2}-KC(d_0)-
C(d_0)\Big)W^2\leq (K^2+K+C(d_0))C.\nonumber
\end{eqnarray}
It follows that for $K > \frac{C(d_0)+\sqrt{C(d_0)^2+2\gamma C(d_0)}}{\gamma}$ we have $K^2\frac{\gamma}{2}-KC(d_0)-
C(d_0)>0$ and
\begin{equation}
W^2\leq \frac{C(K^2+K+C(d_0))}{K^2\frac{\gamma}{2}-KC(d_0)-C(d_0)}.
\end{equation}
This finishes the proof of the proposition.
\end{proof}

\begin{theorem}
There exists a unique solution $u:\bar\Omega\times [0,\infty)\to\mathbb{I}$ to the problem {\rm (\ref{derX})-(\ref{neumann})}.
\end{theorem}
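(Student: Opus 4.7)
The plan is to apply the standard continuation method for quasilinear parabolic boundary value problems, as in Chapter 14 of \cite{liebermann}. First I would linearize the nonparametric equation (\ref{equation-nonpar}) with the oblique Neumann condition (\ref{neumann-nonpar}) about the smooth initial datum $u_0$, and invoke parabolic Schauder theory together with an inverse function argument to obtain a unique short-time solution $u\in C^{2+\alpha,1+\alpha/2}(\bar\Omega\times[0,T_0))$ for some $T_0>0$; obliqueness is uniform because $|\phi|\le\phi_0<1$. Let $T_{\max}$ be the supremum of times for which such a solution exists. The task is to show $T_{\max}=\infty$, which reduces to deriving uniform \emph{a priori} $C^{2+\alpha,1+\alpha/2}$ bounds on every slab $\bar\Omega\times[0,T^*]$ with $T^*<T_{\max}$.

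Fix such a $T^*$. For the $C^0$ bound I would invoke Proposition \ref{height-prop}, which yields $\max|u_t|\le C_0$ and hence $\max|u|\le CT^*$. For the $C^1$ bound I would combine the two gradient estimates via the test function $\eta W$ from Sections \ref{section-boundary} and \ref{section-interior}. Let $(x_0,t_0)$ be a point where $\eta W$ attains its maximum on $\bar\Omega\times[0,T^*]$. If $t_0=0$ the initial data controls the maximum; if $x_0\in\Gamma$ and $t_0>0$, Proposition \ref{boundary-prop} gives $W(x_0,t_0)\le K$; and if $x_0\in\Omega$ and $t_0>0$, Proposition \ref{interior-prop} gives $W(x_0,t_0)\le C$. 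Choosing $\alpha$ first (large enough to make Proposition \ref{boundary-prop} applicable) and then $K$ (large enough to make Proposition \ref{interior-prop} applicable), all three cases yield a common bound on $\eta W$ at its maximum, and since $\eta$ is bounded above and below by positive constants depending only on the $C^0$ bound of $u$, this propagates to a uniform bound $W\le C_1$ on $\bar\Omega\times[0,T^*]$.

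With uniform $C^0$ and $C^1$ bounds in hand, equation (\ref{equation-nonpar}) becomes uniformly parabolic with uniformly oblique boundary condition. I would then quote the Krylov--Safonov and Liebermann H\"older gradient estimates for oblique boundary value problems to obtain a uniform $C^{1+\alpha,(1+\alpha)/2}$ bound, freeze the coefficients to view the equation as linear with H\"older data, and apply parabolic Schauder estimates to produce uniform $C^{2+\alpha,1+\alpha/2}$ bounds on $\bar\Omega\times[0,T^*]$. These estimates pass to the limit $t\to T_{\max}^-$, yielding a $C^{2+\alpha,1+\alpha/2}$ datum at $T_{\max}$ from which short-time existence can be restarted, contradicting the maximality of $T_{\max}$ unless $T_{\max}=\infty$. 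Uniqueness follows by a standard comparison argument: the difference $w=u_1-u_2$ of two solutions satisfies a linear uniformly parabolic equation with a linear oblique boundary condition obtained by interpolating the nonlinear coefficients, and the parabolic maximum principle for oblique problems forces $w\equiv 0$.

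The principal obstacle is the bookkeeping involved in matching the interior and boundary gradient estimates through the test function $\eta W$: $\alpha$ must be fixed first \emph{independently of} $K$, and only then may $K$ be chosen large (depending on $\alpha$ and on the $T^*$-dependent $C^0$ bound of $u$) so that Proposition \ref{interior-prop} kicks in at an interior maximum. A secondary subtlety is that the constants produced by the interior proposition blow up as $d(x_0)\to 0$, so one must verify that a maximum sufficiently close to $\Gamma$ is already handled by the boundary estimate; this is ensured by the explicit dependence on $d_0$ in Proposition \ref{interior-prop}. Threading these dependencies so that the final $C^1$ bound depends only on $T^*$ and the geometric data, and not on the point $(x_0,t_0)$, is the delicate part of closing the continuation argument.
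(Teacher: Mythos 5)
Your proposal follows essentially the same route as the paper: the $C^0$ bound from Proposition \ref{height-prop}, the global gradient bound obtained by locating the maximum of $\eta W$ and invoking Propositions \ref{boundary-prop} and \ref{interior-prop}, and then the standard quasilinear parabolic theory of \cite{liebermann} to close the argument. The paper leaves the continuation/Schauder machinery and uniqueness as a citation to that standard theory, so your more detailed unpacking of those steps is consistent with, not different from, its proof.
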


\begin{proof}
Propositions \ref{height-prop}, \ref{boundary-prop} and \ref{interior-prop} yield the following global gradient bound
\begin{equation}
W(x,t) \le W(x_0, t_0) \frac{\eta(x,t)}{\eta(x_0, t_0)} \le C_1 e^{C_2 MT^*},
\end{equation}
for $(x,t)\in \bar\Omega \times [0,T^*]$, where $C_1$ and $C_2$ are positive constants and
\[
M =\max\limits_{\bar\Omega\times [0,T^*]} |u-u_0|.
\]
It results that (\ref{equation-nonpar}) is uniformly parabolic and then the standard theory of quasilinear parabolic PDEs may be applied for assuring the existence of a unique smooth solution to (\ref{equation-nonpar})-(\ref{neumann-nonpar}).
\end{proof}

\section{Asymptotic behavior}\label{section-asymptotic}

In the particular case when $u(x,t)=v(x)+Ct, \, (x,t)\in \bar\Omega\times [0,T),$  the initial value problem (\ref{equation-nonpar})-(\ref{neumann-nonpar}) becomes
\begin{eqnarray}
\label{elliptic-equation}
& & \textrm{div}\frac{\nabla v}{W}-\gamma\langle \bar\nabla_Y Y, \frac{\nabla v}{W} \rangle=\mathcal{H}+\frac{C}{W}\quad \textrm{in}\quad \Omega\\
\label{neumann-elliptic}& & \langle \nu,N\rangle=\phi(x,v) \quad\quad \quad\quad \quad\quad \quad\quad\,\,\textrm{on}\quad \partial\Omega
\end{eqnarray}
Conversely, notice that if $v(x)$ is a solution of (\ref{elliptic-equation})-(\ref{neumann-elliptic}) then $u=v+Ct$ is a solution of (\ref{equation-nonpar}) which is translating along the flow lines of $Y$ with speed $C$.

Now observe that
\[
\textrm{div}\frac{\nabla v}{W}-\gamma\langle \bar\nabla_Y Y, \frac{\nabla v}{W} \rangle = \textrm{div}\frac{\nabla v}{W}+\gamma\langle \bar\nabla_{\frac{\nabla v}{W}} Y, Y  \rangle=
\textrm{div}\frac{\nabla v}{W}+\gamma\langle \bar\nabla_Y \frac{\nabla v}{W} Y, Y  \rangle = \textrm{div}_M \frac{\nabla v}{W}.
\]
Therefore it follows from divergence theorem that
\begin{equation}
\int_{\vartheta([0,s]\times\bar\Omega)} \frac{C}{W}+\mathcal{H}=-\int_{\vartheta([0,s]\times\Gamma)} \langle\frac{\nabla v}{W}, \nu\rangle = \int_{\vartheta([0,s]\times\Gamma)} \langle N, \nu\rangle =\int_{\vartheta([0,s]\times\Gamma)} \phi .
\end{equation}
Since the integrands do not depend on $s$ we have
\begin{equation}
\int_{\Omega} C\frac{\sqrt{\gamma}}{W}+\sqrt{\gamma}\mathcal{H}=\int_{\Gamma} \phi\sqrt{\gamma}.
\end{equation}
from what results that
\begin{equation}
C=\frac{\int_{\Gamma}\phi\sqrt{\gamma} -\int_\Omega\mathcal{H}\sqrt{\gamma}}{\int_{\Omega}\frac{\sqrt{\gamma}}{W}}.
\end{equation}
Since $W\le C_1  e^{C_2 MT}$ and $|\phi|<1$ we conclude that
\begin{equation}
C\le \frac{|\Gamma| +\sup_\Omega |\mathcal{H}||\Omega|}{|\Omega|}C_1 e^{C_2MT}.
\end{equation}

Comparing an arbitrary solution of the mean curvature flow with translating graphs yields

\begin{proposition}
\label{height-M} Suppose that there exists a solution of {\rm (\ref{elliptic-equation})} for a given $C$. Then given a solution $u(x,t)$ of (\ref{equation-nonpar}) there exists a constant $M$ such that
\begin{equation}
\label{heightM}
|u(x,t)-Ct|\leq M
\end{equation}
for $(x,t)\in\bar\Omega\times [0,+\infty)$.
\end{proposition}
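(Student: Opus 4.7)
The approach is comparison with the translating barriers supplied by the hypothesis. I first verify that $w(x,t):=v(x)+Ct$ is itself a solution of (\ref{equation-nonpar})-(\ref{neumann-nonpar}): multiplying (\ref{elliptic-equation}) by $W$ recovers (\ref{uder}) for $w$ (since $w_t = C$), and since the unit normal $N = (\gamma Y - \vartheta_*\nabla u)/W$ depends on $u$ only through $\nabla u$, the boundary condition transfers directly from (\ref{neumann-elliptic}). The same observation --- that the right-hand side of (\ref{equation-nonpar}) and $\langle N,\nu\rangle$ depend on $u$ only through its spatial derivatives --- implies that adding a constant to any solution yields another solution. Consequently the translates $w^\pm(x,t):=v(x)+Ct\pm K_\pm$ also solve (\ref{equation-nonpar})-(\ref{neumann-nonpar}) for any constants $K_\pm\geq 0$.

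Choosing $K_\pm := \max_{\bar\Omega}|u_0 - v|$, the initial ordering $w^-(\cdot,0)\leq u(\cdot,0)\leq w^+(\cdot,0)$ holds, and the parabolic comparison principle then propagates it to all $t\geq 0$, yielding
\[
|u(x,t)-Ct|\leq \max_{\bar\Omega}|v| + \max(K_+,K_-) =: M,
\]
which is the claimed bound.

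The crux is therefore the comparison principle for the quasilinear parabolic problem (\ref{equation-nonpar})-(\ref{neumann-nonpar}). Setting $\varphi:=u-w^+$, a mean value computation applied to the right-hand side of (\ref{equation-nonpar}) along the segment joining $(\nabla u,\nabla^2 u)$ to $(\nabla w^+,\nabla^2 w^+)$ produces a linear parabolic inequality $\varphi_t \leq a^{ij}\varphi_{i;j}+b^i\varphi_i$, with $a^{ij}$ positive definite whenever the gradients of $u$ and $w^+$ remain bounded. An interior positive maximum of $\varphi$ is excluded by the weak maximum principle, while at a boundary maximum point on $\Gamma$ the common Neumann condition $\langle N,\nu\rangle = \phi$, linearized in $\nabla u$, yields an oblique boundary relation whose coefficient along $\nu$ is controlled from below by $1 - \phi^2 \geq 1 - \phi_0^2 > 0$; a Hopf-type argument in the spirit of the one appearing in the proof of Proposition \ref{height-prop} then rules out $\varphi_\nu>0$, giving the contradiction. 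The main technical obstacle is ensuring the uniform ellipticity of $a^{ij}$, which is supplied on each finite interval $[0,T^*]$ by Propositions \ref{boundary-prop} and \ref{interior-prop}; since the resulting $M$ is independent of $T^*$, the estimate extends to all of $[0,\infty)$ via exhaustion.
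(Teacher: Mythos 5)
Your proposal is correct and follows essentially the same route as the paper: comparison of $u$ with vertical translates of the translating solution $v+Ct$ (the paper uses $v+\inf_\Omega(u_0-v)$ and $v+\sup_\Omega(u_0-v)$, you use $v+Ct\pm\max_{\bar\Omega}|u_0-v|$), combined with the parabolic maximum principle. The only difference is that you spell out the justification of the comparison principle (linearization, obliqueness of the Neumann condition via $1-\phi^2>0$, gradient bounds for uniform parabolicity), which the paper leaves implicit.
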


\begin{proof} Consider a solution $v$ of (\ref{elliptic-equation}). Then consider the functions $v_1=v+\inf\limits_\Omega (u_0-v)$ and $v_2 = v+\sup\limits_\Omega (u_0-v)$ which are also solutions of (\ref{elliptic-equation}). By definition we have $v_1\le u_0\le v_2$. Hence the parabolic maximum principle implies that
\[
v_1 + Ct \le u(\cdot, t) \le v_2 +Ct,
\]
for $t\in[0,T)$ from what we obtain (\ref{heightM}).
\end{proof}

\begin{theorem}
Suppose that there exists a solution of {\rm (\ref{elliptic-equation})} for  $C=0$.  Then $\lim_{t\rightarrow\infty}u_t=0$.  In particular the mean curvature flow converges to a graph with prescribed mean curvature $\mathcal{H}$ and prescribed contact angle $\phi$.
\end{theorem}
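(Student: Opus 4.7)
The idea is to establish uniform-in-time regularity and then exhibit a monotone Lyapunov functional whose time derivative controls $u_t$ in $L^2$.

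First, I will invoke Proposition~\ref{height-M} with $C = 0$: the hypothesized elliptic solution yields $|u(x,t)| \le M$ uniformly for $t \ge 0$. Feeding this into the proofs of Propositions~\ref{boundary-prop} and~\ref{interior-prop}---whose constants depend only on $M$ and on the geometric data, not on the length of the time interval---gives a time-independent gradient bound $W \le W_0$ on $\bar\Omega \times [0,\infty)$. Thus~(\ref{equation-nonpar}) becomes uniformly parabolic for all $t$, and parabolic Schauder estimates then provide a uniform bound on $\|u\|_{C^{k,\alpha}(\bar\Omega \times [t,t+1])}$ independent of $t$.

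Next I would introduce the weighted area-plus-penalty functional
\[
E(t) \;=\; \int_\Omega \frac{W}{\sqrt\gamma}\, d\Omega \,+\, \int_\Omega \frac{\mathcal{H}\, u}{\sqrt\gamma}\, d\Omega \,+\, \int_\Gamma \frac{\phi\, u}{\sqrt\gamma}\, d\Gamma,
\]
in which $d\Omega$ and $d\Gamma$ denote the Riemannian volumes of $(P,\sigma)$ and of $\Gamma$. Differentiating using $W_t = u^i u_{i;t}/W$, integrating by parts against $\sigma$, and recognizing via~(\ref{Hdiv}) that $\mathrm{div}_\sigma(\nabla u/(W\sqrt\gamma)) = nH/\sqrt\gamma$, together with the Neumann relation $\langle \nabla u, \nu\rangle = -\phi W$ on $\Gamma$ (the unpacking of~(\ref{neumann-nonpar})) and the evolution $u_t = W(nH - \mathcal{H})$, one should obtain the identity
\[
\frac{dE}{dt} \;=\; -\int_\Omega \frac{u_t^{\,2}}{W\sqrt\gamma}\, d\Omega \;\le\; 0.
\]

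With $E(t)$ uniformly bounded (by the previous step) and monotonically non-increasing, the integrated inequality $\int_0^\infty \int_\Omega u_t^2/(W\sqrt\gamma)\, d\Omega\, dt < \infty$ follows at once. Since $W$ is bounded above, this is a genuine $L^2$ bound on $u_t$ in space-time. The linear parabolic equation satisfied by $u_t$, derived at the start of the proof of Proposition~\ref{height-prop}, has uniformly bounded coefficients in our setting, so parabolic regularity yields a uniform H\"older bound on $u_t$ in $(x,t)$. A standard equicontinuity argument---if $|u_t(x_n,t_n)|\ge\varepsilon$ along a sequence $t_n\to\infty$ then the H\"older bound forces a positive mass of $u_t^2$ on parabolic cylinders of uniform size, contradicting summability---forces $\|u_t(\cdot,t)\|_{L^\infty(\bar\Omega)}\to 0$. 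Finally, along any sequence $t_n\to\infty$, the uniform $C^{k,\alpha}$ bound and Arzel\`a-Ascoli extract a $C^2$-limit $u_\infty$ of $u(\cdot,t_n)$; passing to the limit in~(\ref{equation-nonpar}) using $u_t\to 0$, and in the boundary condition by continuity, shows that $u_\infty$ solves~(\ref{elliptic})--(\ref{neumann-1}), so its Killing graph has prescribed mean curvature $\mathcal{H}$ and contact angle $\phi$.

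The hard part will be the monotonicity identity: one must carry the warped factor $1/\sqrt\gamma$ carefully through every piece of the integration by parts and identify the resulting divergence with $nH/\sqrt\gamma$ via~(\ref{Hdiv}), so that the boundary contribution exactly cancels against the variation of the $\phi$-penalty and the bulk contribution against the variation of the $\mathcal{H}$-penalty, leaving only the nonpositive term $-\int u_t^2/(W\sqrt\gamma)$. Once this Lyapunov structure is in place, the remaining steps---parabolic compactness and the equicontinuity-based contradiction---are standard.
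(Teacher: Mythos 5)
Your proposal follows the same Lyapunov route as the paper -- differentiate an area--type functional with boundary and bulk penalties, integrate in time to get $\int_0^\infty\int_\Omega u_t^2/W<\infty$, and conclude $u_t\to 0$ -- but with two genuine refinements. First, the paper works with the unweighted quantity $\int_\Omega W+\int_{\partial\Omega}u\phi$, and its identity leaves over the terms $\int_\Omega\frac{1}{2W^3}\langle\nabla u,\nabla\gamma\rangle$ and $\int_\Omega\frac{|\nabla u|^2}{2\gamma W^2}\langle\nabla u,\nabla\gamma\rangle$, whose time-integrability is simply asserted; your $\sqrt{\gamma}$-weighted functional is the actual area of the Killing graph, and since ${\rm div}_\sigma\big(\nabla u/(W\sqrt{\gamma})\big)=nH/\sqrt{\gamma}$ is exactly (\ref{Hdiv}), those leftover terms never appear -- this is cleaner and arguably repairs a soft spot in the paper's computation. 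Second, the paper passes from $\int_0^\infty\int_\Omega u_t^2/W<\infty$ directly to $\lim_{t\to\infty}u_t=0$; your insertion of the time-independent gradient bound (via Proposition \ref{height-M} feeding into Propositions \ref{boundary-prop} and \ref{interior-prop}, since the constant there is $C_1e^{K\,{\rm osc}\,u}$ and the oscillation is now bounded uniformly in $t$), the uniform Schauder bounds, and the equicontinuity contradiction supplies exactly the justification the paper omits, and is what also legitimizes extracting the limit graph. One bookkeeping caution: the sign of the $\phi$-penalty must be the one that cancels the boundary term produced by the integration by parts; with the paper's convention ($\nu$ the inward normal of $K$ and $\phi=\langle N,\nu\rangle$, so $\langle\nabla u,\nu\rangle=-\phi W$) the outward flux is $+\phi/\sqrt{\gamma}$ and the penalty should enter as $-\int_\Gamma\phi u/\sqrt{\gamma}$, i.e.\ with the sign opposite to the one you wrote (the paper's own displayed sign is also convention-sensitive). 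This is a detail, not a structural gap: once the sign is fixed, the monotonicity identity you flagged as the hard part does go through exactly as you describe, and the rest of your argument is sound.
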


\begin{proof} Since $C=0$ Proposition (\ref{height-M}) implies that
\begin{equation}
\Big|\int_{\Gamma}u\phi\Big| \leq M\int_{\Gamma}|\phi|\leq M|\Gamma|,
\end{equation}
for $t\in [0,T)$.
Also we have
\begin{eqnarray*}
\frac{d}{dt}\int_{\Omega}W=\int_{\Omega}\frac{u^iu_{i;t}}{W}=-\int_{\Omega}\frac{u_t^2}{W}-\int_{\Omega}\frac{1}{2W^3}\langle\nabla u,\nabla\gamma\rangle -\int_{\Omega}\frac{|\nabla u|^2}{2\gamma W^2}\langle\nabla u,\nabla\gamma\rangle -\int_{\partial\Omega}u_t\phi.
\end{eqnarray*}
Therefore
\begin{eqnarray}
-\int_{\Omega}\frac{u_t^2}{W}=\frac{d}{dt}\bigg(\int_{\Omega}W+\int_{\partial\Omega}u\phi \bigg)+\int_{\Omega}\frac{1}{2W^3}\langle\nabla u,\nabla\gamma\rangle+\int_{\Omega}\frac{|\nabla u|^2}{2\gamma W^2}\langle\nabla u,\nabla\gamma\rangle.
\end{eqnarray}
It follows that
\begin{eqnarray}
& & \int_{0}^{T}\int_{\Omega}\frac{u_t^2}{W}=-\int_{\Omega}W(x,T)+\int_{\Omega}W(x,0)-\int_{\partial\Omega}u(x,T)\phi \nonumber\\
& &+\int_{\partial\Omega}u(x,0)\phi+\int_{0}^{T}\int_{\Omega}\frac{1}{2W^3}\langle\nabla u,\nabla\gamma\rangle +\int_{0}^{T}\int_{\Omega}\frac{|\nabla u|^2}{2\gamma W^2}\langle\nabla u,\nabla\gamma\rangle \leq C\nonumber
\end{eqnarray}
from what follows that $\lim\limits_{t\to\infty} \frac{u_t^2}{W}=0$. Since $W$ is bounded then $\lim\limits_{t\to\infty} u_t =0$. This finishes the proof of the theorem.
\end{proof}

\section{Appendix}

In what follows, $II$ and $A$ denote respectively the second fundamental form and the Weingarten map of $\Sigma_t$. Their components are given by
\begin{equation}
a_{ij} =II(X_*\frac{\partial}{\partial x^i},X_*\frac{\partial}{\partial x^j}):= \langle AX_*\frac{\partial}{\partial x^i},X_*\frac{\partial}{\partial x^j}\rangle
\end{equation}

Some lemmata will be needed in the sequel. Their content could be also of independent interest for other applications.

\begin{lemma} Denote $\theta=\langle\nabla d, N\rangle$. The differentials of the functions $\theta$ and $h$ have components given by
\begin{equation}
\theta_i =-a^j_i d_ j +(d_{i;j}-\kappa\sigma_{ij})N^j
\end{equation}
and
\begin{equation}
h_i = (\alpha\delta^j_i +\phi a^j_i) d_j -(\phi(d_{i;j}-\kappa\sigma_{ij})+\phi_i d_j)N^j
\end{equation}
respectively, where $\kappa=\langle \gamma \bar\nabla_Y Y , \nabla d\rangle$.
\end{lemma}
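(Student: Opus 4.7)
\emph{Proof proposal.} The strategy is to differentiate $\theta=\langle \nabla d, N\rangle$ and $h=1+\alpha d-\phi\theta$ along the hypersurface tangent vectors $X_*\frac{\partial}{\partial x^i}=\frac{\partial}{\partial x^i}+u_iY$, combining the product rule with the Weingarten equation and Killing's equation.

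First extend $d$ and $\phi$ from $\bar\Omega$ to all of $M$ by declaring them independent of the flow parameter of $Y$; then $Y(d)=0$, so $\nabla d$ is tangent to each integral leaf and $Y$-invariant, which yields $[Y,\nabla d]=0$. The product rule gives
\[
\theta_i = \langle \bar\nabla_{X_*\partial/\partial x^i}\nabla d, N\rangle + \langle \nabla d, \bar\nabla_{X_*\partial/\partial x^i} N\rangle.
\]
The second summand collapses via the Weingarten formula $\bar\nabla_{X_*\partial/\partial x^i} N=-a_i^j X_*\partial/\partial x^j$ and the identity $\langle \nabla d, X_*\partial/\partial x^j\rangle=d_j$ (which holds because $\nabla d\perp Y$), producing $-a_i^j d_j$. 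For the first summand, split $X_*\frac{\partial}{\partial x^i}=\frac{\partial}{\partial x^i}+u_iY$: since the leaves are totally geodesic and isometric, $\bar\nabla_{\partial/\partial x^i}\nabla d$ coincides with the intrinsic $P$-Hessian of $d$, contributing $d_{i;j}N^j$ after pairing with $N$; for the $u_iY$ piece, use $[Y,\nabla d]=0$ to rewrite $\bar\nabla_Y\nabla d=\bar\nabla_{\nabla d}Y$, invoke (\ref{killing-1}), and recognize the resulting factor $\frac{\langle\nabla d,\nabla \gamma\rangle}{2\gamma}$ as $\kappa$ via (\ref{killing-2}). Assembling these pieces and using $\sigma_{ij}N^j=N_i=-u_i/W$ to repackage the $\kappa$ contribution produces the first formula of the lemma.

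For $h_i$, differentiate $h=1+\alpha d-\phi\theta$ directly to obtain $h_i=\alpha d_i-\phi_i\theta-\phi\theta_i$. Substituting the expression for $\theta_i$ and using $\theta=d_jN^j$, the $-\phi(-a_i^j d_j)$ piece combines with $\alpha d_i=\alpha\delta_i^j d_j$ into $(\alpha\delta_i^j+\phi a_i^j)d_j$, while $-\phi(d_{i;j}-\kappa\sigma_{ij})N^j-\phi_i d_j N^j$ regroups into $-(\phi(d_{i;j}-\kappa\sigma_{ij})+\phi_i d_j)N^j$, as claimed.

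The main obstacle is sign bookkeeping: keeping straight which covariant derivatives are intrinsic to $P$ versus to $M$, and correctly reducing the mixed derivative $\bar\nabla_Y\nabla d$ to $\bar\nabla_{\nabla d}Y$ via the Killing identity so that $\kappa$ emerges with the correct sign and coefficient.
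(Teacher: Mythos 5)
Your strategy coincides with the paper's: differentiate $\theta=\langle N,\bar\nabla d\rangle$ along $X_*\frac{\partial}{\partial x^i}=\frac{\partial}{\partial x^i}+u_iY$, use the Weingarten relation to get $\langle\bar\nabla_{X_*\frac{\partial}{\partial x^i}}N,\bar\nabla d\rangle=-a_i^jd_j$, use that the leaves are totally geodesic to reduce $\bar\nabla_{\frac{\partial}{\partial x^i}}\bar\nabla d$ to the $P$-Hessian (giving $d_{i;j}N^j$ after pairing with $N$), and treat the $u_iY$ piece via $[Y,\bar\nabla d]=0$ together with the Killing identities; the formula for $h_i$ is then pure algebra from $h=1+\alpha d-\phi\theta$, exactly as you say (the paper does not even write that part out). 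The only cosmetic difference is that you convert $\bar\nabla_Y\bar\nabla d$ into $\bar\nabla_{\bar\nabla d}Y$ before pairing with $N$, whereas the paper pairs separately with the two pieces $\frac{\gamma}{W}Y$ and $-\frac{\nabla u}{W}$ of $N$.

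One caveat, at precisely the spot you yourself single out as the main obstacle: carrying out the sign bookkeeping along your route does not land on the printed sign of the $\kappa$-term. Indeed $[Y,\bar\nabla d]=0$ and (\ref{killing-1}) give $\bar\nabla_Y\bar\nabla d=\bar\nabla_{\bar\nabla d}Y=-\frac{\langle\nabla\gamma,\nabla d\rangle}{2\gamma}Y=-\kappa Y$, since by (\ref{killing-2}) one has $\kappa=\gamma\langle\bar\nabla_YY,\nabla d\rangle=\frac{\langle\nabla\gamma,\nabla d\rangle}{2\gamma}$; as $\langle N,Y\rangle=\frac{1}{W}$, the $u_iY$ piece contributes $-\kappa\frac{u_i}{W}=+\kappa\sigma_{ij}N^j$ (recall $\sigma_{ij}N^j=-\frac{u_i}{W}$), so the identity your argument actually proves is $\theta_i=-a_i^jd_j+(d_{i;j}+\kappa\sigma_{ij})N^j$. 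The paper reaches the printed $-\kappa\sigma_{ij}$ through the line $\langle\frac{\partial}{\partial x^0},\bar\nabla_{\frac{\partial}{\partial x^0}}\bar\nabla d\rangle=|Y|^2\kappa$, which by the same commutation argument equals $-|Y|^2\kappa$ with the stated definition of $\kappa$; so the mismatch is a sign slip on the paper's side (harmless downstream, where only bounds involving $|\kappa|$ are used, and repaired by taking $\kappa=-\gamma\langle\bar\nabla_YY,\nabla d\rangle$), not a flaw in your method. Still, you should not assert that $\kappa$ ``emerges with the correct sign'' without exhibiting the computation: as written, your sketch claims the printed statement while the route you describe yields it with $\kappa$ replaced by $-\kappa$.
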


\begin{proof}  We have
\begin{eqnarray*}
& & \frac{\partial\theta}{\partial x^i} = X_*\frac{\partial}{\partial x^i}\langle N, \bar\nabla d\rangle = \langle \bar\nabla_{X_*\frac{\partial}{\partial x^i}}N, \bar\nabla d\rangle +\langle N, \bar\nabla_{X_*\frac{\partial}{\partial x^i}}\bar\nabla d\rangle \\
& &\,\,= -\langle AX_*\frac{\partial}{\partial x^i}, \bar\nabla d\rangle +\langle N, \bar\nabla_{\frac{\partial}{\partial x^i}+u_i \frac{\partial}{\partial x^0}}\bar\nabla d\rangle\\
& & \,\,= -\langle AX_*\frac{\partial}{\partial x^i}, \bar\nabla d\rangle +\frac{\gamma}{W}\langle \frac{\partial}{\partial x^0}, \bar\nabla_{\frac{\partial}{\partial x^i}}\bar\nabla d\rangle
-\langle \frac{\nabla u}{W}, \bar\nabla_{\frac{\partial}{\partial x^i}}\bar\nabla d\rangle\\
& & \,\,\,\,+u_i\frac{\gamma}{W}\langle \frac{\partial}{\partial x^0}, \bar\nabla_{ \frac{\partial}{\partial x^0}}\bar\nabla d\rangle
-u_i\langle \frac{\nabla u}{W}, \bar\nabla_{ \frac{\partial}{\partial x^0}}\bar\nabla d\rangle
\end{eqnarray*}
Since $P$ is totally geodesic  we have
\[
\langle \frac{\partial}{\partial x^0}, \bar\nabla_{\frac{\partial}{\partial x^i}}\bar\nabla d\rangle =\langle \frac{\partial}{\partial x^0}, \bar\nabla_{\frac{\partial}{\partial x^i}}\nabla d\rangle=0.
\]
Moreover we compute
\begin{eqnarray*}
\langle \frac{\partial}{\partial x^0}, \bar\nabla_{ \frac{\partial}{\partial x^0}}\bar\nabla d\rangle= |Y|^2\langle \frac{Y}{|Y|}, \bar\nabla_{ \frac{Y}{|Y|}}\bar\nabla d\rangle=|Y|^2 \kappa = \frac{1}{\gamma}\kappa
\end{eqnarray*}
and
\begin{eqnarray*}
& & \langle \frac{\nabla u}{W}, \bar\nabla_{ \frac{\partial}{\partial x^0}}\bar\nabla d\rangle = \langle \frac{\nabla u}{W}, \bar\nabla_{\bar\nabla d}  \frac{\partial}{\partial x^0}\rangle+\langle \frac{\nabla u}{W}, [\frac{\partial}{\partial x^0}, \bar\nabla d]\rangle=0,
\end{eqnarray*}
where we used the fact that $[\frac{\partial}{\partial x^0}, \bar\nabla d]=0$ and that $P$ is totally geodesic.

Thus we conclude that
\begin{eqnarray*}
& & \frac{\partial\theta}{\partial x^i} = -\langle AX_*\frac{\partial}{\partial x^i}, \bar\nabla d\rangle
-\langle \frac{\nabla u}{W}, \nabla_{\frac{\partial}{\partial x^i}}\nabla d\rangle+\kappa \frac{u_i}{W}.
\end{eqnarray*}
However
\begin{eqnarray*}
& & \langle AX_*\frac{\partial}{\partial x^i}, \bar\nabla d\rangle =a_i^j \langle X_*{\frac{\partial}{\partial x^j}},\bar\nabla d\rangle =
a_i^j \langle \frac{\partial}{\partial x^j}+u_j Y,\bar\nabla d\rangle = a_i^j d_j = g^{jk}a_{ik}d_j
\end{eqnarray*}
Therefore we write
\begin{equation}
\label{nabla-theta}
\theta_i = -g^{jk} a_{ik} d_j +(d_{i;j}-\kappa\sigma_{ij})N^j.
\end{equation}
This finishes the proof of the proposition.
\end{proof}

We denote the components of the tensor $X^*II$ in $P$ by
\begin{equation}
b_{ij} =X^*II(\frac{\partial}{\partial x^i},\frac{\partial}{\partial x^j}):= \langle AX_*\frac{\partial}{\partial x^i},X_*\frac{\partial}{\partial x^j}\rangle
\end{equation}
Notice that the covariant derivatives of $X^*II$ and $II$ are related by
\begin{eqnarray*}
& & \nabla_k b_{ij} =\langle(\nabla^\Sigma_{X_*\frac{\partial}{\partial x^k}} A)X_*\frac{\partial}{\partial x^i},X_*\frac{\partial}{\partial x^j}\rangle +\langle AX_*\frac{\partial}{\partial x^j}, \bar\nabla_{X_*\frac{\partial}{\partial x^k}}X_*\frac{\partial}{\partial x^i}-X_*\nabla_{\frac{\partial}{\partial x^k}}\frac{\partial}{\partial x^i}\rangle \\
& & \,\,+ \langle AX_*\frac{\partial}{\partial x^i},\bar\nabla_{X_*\frac{\partial}{\partial x^k}}X_*\frac{\partial}{\partial x^j}-X_*\nabla_{\frac{\partial}{\partial x^k}}\frac{\partial}{\partial x^j}\rangle.
\end{eqnarray*}
However since $X_*\frac{\partial}{\partial x^i}= \frac{\partial}{\partial x^i}+u_i Y$  we compute
\begin{eqnarray*}
& & \bar\nabla_{X_*\frac{\partial}{\partial x^k}}X_*\frac{\partial}{\partial x^i}-X_*\nabla_{\frac{\partial}{\partial x^k}}\frac{\partial}{\partial x^i}=\bar\nabla_{\frac{\partial}{\partial x^k}}\frac{\partial}{\partial x^i}+u_{i,k} Y+u_i \bar\nabla_{\frac{\partial}{\partial x^k}}Y+u_k\bar\nabla_Y\frac{\partial}{\partial x^i}
+u_i u_k \bar\nabla_Y Y\\
& &\,\,\,\, -\nabla_{\frac{\partial}{\partial x^k}}\frac{\partial}{\partial x^i} -\langle \nabla u, \nabla_{\frac{\partial}{\partial x^k}}\frac{\partial}{\partial x^i}\rangle Y.
\end{eqnarray*}
Therefore
\begin{eqnarray*}
\bar\nabla_{X_*\frac{\partial}{\partial x^k}}X_*\frac{\partial}{\partial x^i}-X_*\nabla_{\frac{\partial}{\partial x^k}}\frac{\partial}{\partial x^i}=u_{i;k} Y+u_i \bar\nabla_{\frac{\partial}{\partial x^k}}Y+u_k\bar\nabla_{\frac{\partial}{\partial x^i}}Y
+u_i u_k \bar\nabla_Y Y.
\end{eqnarray*}
Hence using  (\ref{aij-gamma}),  (\ref{killing-1}) and (\ref{killing-2}) we obtain
\begin{eqnarray*}
& & \bar\nabla_{X_*\frac{\partial}{\partial x^k}}X_*\frac{\partial}{\partial x^i}-X_*\nabla_{\frac{\partial}{\partial x^k}}\frac{\partial}{\partial x^i}=(Wa_{ik}+u_i u_k u^l\frac{\gamma_l}{2\gamma^2}) Y
+\frac{1}{2}u_i u_k \frac{\nabla \gamma}{\gamma^2}\\
& & \,\,=Wa_{ik} Y +\frac{1}{2\gamma^2}u_i u_k (\langle \nabla u, \nabla \gamma\rangle Y
+ \nabla \gamma)=Wa_{ik} Y +\frac{1}{2\gamma^2}u_i u_k X_*\nabla \gamma.
\end{eqnarray*}
Hence it follows that
\begin{eqnarray*}
& & \langle AX_*\frac{\partial}{\partial x^j}, \bar\nabla_{X_*\frac{\partial}{\partial x^k}}X_*\frac{\partial}{\partial x^i}-X_*\nabla_{\frac{\partial}{\partial x^k}}\frac{\partial}{\partial x^i}\rangle =
\langle AX_*\frac{\partial}{\partial x^j}, \frac{u_i u_k}{2\gamma^2}X_*\nabla \gamma + W a_{ik}Y\rangle\\
& &\,\,=\frac{1}{\gamma}W a_{ik}a_j^l u_l +\frac{u_i u_k}{2\gamma^2}a_{jl}\gamma^l.
\end{eqnarray*}
We conclude that
\begin{eqnarray*}
& & \nabla_k b_{ij} =\langle(\nabla^\Sigma_{X_*\frac{\partial}{\partial x^k}} A)X_*\frac{\partial}{\partial x^i},X_*\frac{\partial}{\partial x^j}\rangle +\frac{1}{\gamma}W a_{ik}a_j^l u_l +\frac{u_i u_k}{2\gamma^2}a_{jl}\gamma^l \\
& & \,\,+ \frac{1}{\gamma}W a_{jk}a_i^l u_l +\frac{u_j u_k}{2\gamma^2}a_{il}\gamma^l,
\end{eqnarray*}
that is,
\begin{eqnarray}
\label{nabla-nabla}
\nabla_k b_{ij} =\nabla^\Sigma_k a_{ij} +\frac{1}{\gamma}W a_{ik}a_j^l u_l + \frac{1}{\gamma}W a_{jk}a_i^l u_l +\frac{u_i u_k}{2\gamma^2}a_{jl}\gamma^l  +\frac{u_j u_k}{2\gamma^2}a_{il}\gamma^l.
\end{eqnarray}

Now we use (\ref{nabla-nabla}) for computing the Hessian of the function $\theta$.

\begin{lemma}
\label{hessian-theta}
The trace of the Hessian of $\theta$ in $\Omega$ calculated with respect to the metric in $\Sigma$ is given by
\begin{eqnarray*}
& & g^{ik}\theta_{i;k} =- |A|^2 \theta-2\langle \nabla^2 d, X^* II\rangle_\Sigma - n\langle \nabla^\Sigma H, \nabla^\Sigma d\rangle - nHW\langle AY^T, \nabla^\Sigma d\rangle-{\rm Ric}(\nabla d, \frac{\nabla u}{W}) \\
& &\,\,\,\,-{\rm tr}_\Sigma\nabla_{\frac{\nabla u}{W}} \nabla^2 d - \frac{|\nabla u|^2}{W^2} \langle A \nabla^\Sigma d, X_*\frac{\nabla\gamma}{2\gamma}\rangle +\frac{1}{2}\langle AY^T, Y^T\rangle\langle \nabla d, \nabla \gamma\rangle-\frac{1}{2W^2}\nabla^2 d(\frac{\nabla u}{W},\nabla\gamma)\\
& & \,\,\,\,-\frac{\gamma}{W^2}\langle N, \nabla \kappa\rangle
+\kappa (nH-\gamma\langle AY^T, Y^T\rangle)-\kappa \frac{1}{2W^2}\langle N,  \nabla\gamma\rangle.
\end{eqnarray*}
\end{lemma}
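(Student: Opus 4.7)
The strategy is to take the expression
$$\theta_i = -a_i^j d_j + (d_{i;j} - \kappa \sigma_{ij})N^j$$
from the preceding lemma and differentiate it a second time covariantly in $\Sigma$, then contract with $g^{ik}$. The output will split naturally into pieces coming from (a) differentiating $-a_i^j d_j$, (b) differentiating the hybrid term $(d_{i;j}-\kappa\sigma_{ij})N^j$, and (c) collecting $\gamma$-type corrections introduced by switching between ambient, $\Sigma$-intrinsic, and $P$-intrinsic covariant derivatives.

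For the first block, I would differentiate $a_i^j d_j$ as a product. The derivative of the shape operator contracted with $g^{ik}$ becomes $n\nabla^\Sigma_j H$ after applying the Codazzi equation on $\Sigma$, but only once the bridge identity (\ref{nabla-nabla}) is used to trade the ambient derivative $\nabla_k b_{ij}$ for the intrinsic $\nabla^\Sigma_k a_{ij}$. The correction terms coming from (\ref{nabla-nabla}) are precisely what produce the $nHW\langle AY^T, \nabla^\Sigma d\rangle$ and the $\gamma$-gradient contributions to the final expression. The remaining piece $a_i^j \nabla^\Sigma_k d_j$, after tracing, yields the coupling $\langle \nabla^2 d, X^*II\rangle_\Sigma$ (plus a $Y$-tangent contribution because the $\Sigma$-coordinate basis differs from the $P$-coordinate basis by a $u_i Y$ shift).

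For the second block, I would distribute the derivative onto $d_{i;j}$, onto $\kappa$, and onto $N^j$. The derivative hitting $N^j$ is handled by the Weingarten formula, and the resulting contraction $(d_{i;j}-\kappa\sigma_{ij})N^i A e_j$-type term rearranges to give the leading $-|A|^2 \theta$ once the normal-tangent decomposition of $\bar\nabla d$ is used. The derivative hitting $d_{i;j}$ produces the third-order trace $\text{tr}_\Sigma \nabla_{\nabla u/W}\nabla^2 d$, together with a Ricci term $\text{Ric}(\nabla d, \nabla u/W)$ obtained by commuting covariant derivatives via the Ricci identity in $P$ (which is isometric to a leaf and totally geodesic in $M$, letting us replace $P$-curvature by $M$-curvature on the relevant slots). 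The derivative of $\kappa = \tfrac{1}{2\gamma}\langle \nabla\gamma, \nabla d\rangle$ delivers $-\tfrac{\gamma}{W^2}\langle N,\nabla\kappa\rangle$ together with the $\kappa(nH - \gamma\langle AY^T, Y^T\rangle)$ and $\kappa\langle N,\nabla\gamma\rangle/(2W^2)$ terms after substituting $N^j = -u^j/W$ and again using (\ref{nabla-nabla}).

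The main obstacle will be careful bookkeeping rather than any single ingenious step: one must track three different connections (on $P$, on $\Sigma$, and on $M$), apply Codazzi only after subtracting the ambient-versus-intrinsic discrepancy from (\ref{nabla-nabla}), and repeatedly expand $N^j$, $Y^T$, and $\nabla u/W$ in the Gauss-Weingarten decomposition. The $\gamma$-dependent remainder terms in the stated formula are exactly the residue of all these substitutions, so the verification amounts to collecting them systematically and matching them against the right-hand side of the claimed identity.
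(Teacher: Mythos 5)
Your strategy is exactly the paper's own proof: differentiate the first-derivative formula $\theta_i=-g^{jl}b_{il}d_j+(d_{i;j}-\kappa\sigma_{ij})N^j$, contract with $g^{ik}$, and convert everything using the bridge identity (\ref{nabla-nabla}), Codazzi, the Weingarten formula and the Ricci commutation of derivatives of $d$, which is precisely how the stated identity is obtained. Two of your term-attributions are slightly off --- the $-|A|^2\theta$ term actually arises from differentiating the raised index in $a_i^j=g^{jl}b_{il}$ (the $N$-dependence of $g^{jl}$), and the $\kappa\big(nH-\gamma\langle AY^T,Y^T\rangle\big)$ and $\kappa\langle N,\nabla\gamma\rangle/(2W^2)$ terms come from the Weingarten derivative of $N^j$ contracted against $\kappa\sigma_{ij}$ rather than from differentiating $\kappa$ itself --- but these are bookkeeping details that would resolve themselves in carrying out the computation and do not change the method.
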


\begin{proof}  Notice  that we may write  (\ref{nabla-theta}) as
\begin{equation}
\theta_i = -g^{jl} b_{il} d_j +(d_{i;j}-\kappa\sigma_{ij})N^j.
\end{equation}
Hence we have
\begin{eqnarray*}
& & g^{ik} \theta_{i;k} = -g^{ik}(g^{jl} b_{il} d_j)_{;k}+g^{ik}(d_{i;jk}-\kappa_k \sigma_{ij})N^j +g^{ik}(d_{i;j}-\kappa \sigma_{ij})N^j_{;k}\\
& & \,\, =-g^{ik}(g^{jl} b_{il} d_j)_{;k}+g^{ik}(d_{i;kj}+R^l_{jki}d_l-\kappa_k \sigma_{ij})N^j -g^{ik}(d_{i;j}-\kappa \sigma_{ij})(a^j_{k}-N_k \frac{\gamma^j}{2\gamma}).
\end{eqnarray*}
However
\begin{eqnarray*}
& & g^{ik}(g^{jl} b_{il} d_j)_{;k} = g^{jl}g^{ik} b_{il;k}d_j +g^{ik} g^{jl}_{;k}b_{il}d_j + g^{ik}g^{jl}b_{il}d_{j;k}\\
& & \,\,=g^{jl}g^{ik}(\nabla^\Sigma_k a_{il}+\frac{1}{\gamma}W a_{ik}a^m_l u_m+\frac{1}{\gamma}W a_{lk}a^m_i u_m+u_i u_k a_{lm}\frac{\gamma^m}{2\gamma^2}
+u_l u_k a_{im}\frac{\gamma^m}{2\gamma^2})d_j \\
&& \,\,\,\,+g^{ik} g^{jl}_{;k}b_{il}d_j + g^{ik}g^{jl}b_{il}d_{j;k}
\end{eqnarray*}
Hence using Codazzi's equation we obtain
\begin{eqnarray*}
& & g^{ik}(g^{jl} b_{il} d_j)_{;k} =g^{jl}(nH_l +n\frac{1}{\gamma}WH a_l^m u_m+\frac{1}{\gamma}W a_l^i a^m_i u_m + \frac{|\nabla u|^2}{W^2} a_{lm}\frac{\gamma^m}{2\gamma}+u_l u_k a^k_m\frac{\gamma^m}{2\gamma^2})d_j \\
& & \,\,\,\,+g^{jl}g^{ik}\langle \bar R(X_*\frac{\partial}{\partial x^i}, X_*\frac{\partial}{\partial x^k})N, X_*\frac{\partial}{\partial x^l}\rangle d_j+g^{ik} g^{jl}_{;k}b_{il}d_j + g^{ik}g^{jl}b_{il}d_{j;k}
\end{eqnarray*}
Using that $g^{jl}u_l =\frac{\gamma}{W^2} u^j$ we conclude that
\begin{eqnarray*}
& & g^{ik}(g^{jl} b_{il} d_j)_{;k} = ng^{jl}H_ld_j - n\frac{1}{\gamma}W^2H g^{jl}a_l^m N_m d_j-\frac{1}{\gamma}W^2 g^{jl}a_l^i a^m_i N_m d_j\\
& & + \frac{|\nabla u|^2}{W^2} a^j_{m}\frac{\gamma^m}{2\gamma}d_j+N^j N_k a^k_m\frac{\gamma^m}{2\gamma}d_j  +g^{ik} g^{jl}_{;k}b_{il}d_j + g^{ik}g^{jl}b_{il}d_{j;k}
\end{eqnarray*}
However we have
\begin{eqnarray*}
g^{jl}_{;k} = (\sigma^{jl}-N^j N^l)_{;k} = - N^j_{;k}N^l - N^j N^l_{;k} = (a^j_k -N_k \frac{\gamma^j}{2\gamma})N^l +N^j (a^l_k -N_k \frac{\gamma^l}{2\gamma}).
\end{eqnarray*}
and
\begin{eqnarray*}
& & \bar\nabla_{\frac{\partial}{\partial x^k}} N = \bar\nabla_{X_*\frac{\partial}{\partial x^k}}N -\bar\nabla_{u_k Y}N=-A X_*\frac{\partial}{\partial x^k} - u_k  \bar\nabla_Y (\frac{\gamma}{W}Y -\frac{\nabla u}{W}) \\
& & \,\, =-A X_*\frac{\partial}{\partial x^k} - \frac{u_k}{2W}\big(\frac{\nabla\gamma}{\gamma} +\langle \nabla u, \frac{\nabla\gamma}{\gamma}\rangle Y)
\end{eqnarray*}
from what follows that
\begin{eqnarray*}
& & g^{ik}(g^{jl} b_{il} d_j)_{;k} = ng^{jl}H_ld_j - n\frac{1}{\gamma}W^2H a_l^m N_m g^{jl}d_j-\frac{1}{\gamma}W^2 a_l^i a^m_i N_m g^{jl}d_j\\
& &\,\,\,\, + \frac{|\nabla u|^2}{W^2} a^j_{m}\frac{\gamma^m}{2\gamma}d_j+N^j N_k a^k_m\frac{\gamma^m}{2\gamma}d_j +a^k_la^j_kN^l d_j\\
& & \,\,\,\,-a^k_lN_k N^l \frac{\gamma^j}{2\gamma} d_j+ a^k_la^l_k N^j d_j -a^k_lN_k \frac{\gamma^l}{2\gamma}N^jd_j +g^{jl}a^k_{l}d_{j;k}
\end{eqnarray*}
Therefore
\begin{eqnarray*}
& &  g^{ik}\theta_{i;k} =- ng^{jl}H_ld_j + n\frac{1}{\gamma}W^2H a_l^m N_m g^{jl}d_j +\frac{1}{\gamma}W^2 a_l^i a^m_i N_m g^{jl}d_j- \frac{|\nabla u|^2}{W^2} a^j_{m}\frac{\gamma^m}{2\gamma}d_j\\
& &\,\,\,\,  -a^k_la^j_kN^l d_j+a^k_lN_k N^l \langle \nabla d, \frac{\nabla \gamma}{2\gamma}\rangle- a^k_la^l_k \theta-g^{jl}a^k_{l}d_{j;k}\\
& & \,\,\,\,+g^{ik}(d_{i;kj}+R^l_{jki}d_l-\kappa_k \sigma_{ij})N^j
-g^{ik}(d_{i;j}-\kappa \sigma_{ij})(a^j_{k}-N_k \frac{\gamma^j}{2\gamma})
\end{eqnarray*}
Now using the fact that  $g^{ij}u_j = \frac{\gamma}{W^2}u^i$
and therefore $g^{ij}N_j = \frac{\gamma}{W^2}N^i$ we obtain
\begin{eqnarray*}
& & a^m_i N_m = g^{km}a_{ik}N_m =\frac{\gamma}{W^2}a_{ik}N^k =\frac{\gamma}{W^2}\langle AX_*\frac{\partial}{\partial x^i}, N^k X_*\frac{\partial}{\partial x^k}\rangle \\
& &\,\,=
\frac{\gamma}{W^2}\langle AX_*\frac{\partial}{\partial x^i}, N^k\frac{\partial}{\partial x^k}+\langle N^k\frac{\partial}{\partial x^k}, \nabla u\rangle Y\rangle\\
& & \,\,=\frac{\gamma}{W^2}\langle AX_*\frac{\partial}{\partial x^i}, N-\frac{\gamma}{W}Y +\langle N, \nabla u\rangle Y\rangle\\
& & \,\,=-\frac{\gamma}{W^2}\langle AX_*\frac{\partial}{\partial x^i}, Y\rangle \big(\frac{\gamma}{W}+\frac{|\nabla u|^2}{W}\big)=-\frac{\gamma}{W}\langle AX_*\frac{\partial}{\partial x^i}, Y\rangle
=-\frac{\gamma}{W}\langle AY^T, X_*\frac{\partial}{\partial x^i}\rangle.
\end{eqnarray*}
Therefore
\begin{eqnarray*}
& & a_l^m N_m g^{jl}d_j = -\frac{\gamma}{W}\langle AY^T, g^{jl}d_jX_*\frac{\partial}{\partial x^l}\rangle=-\frac{\gamma}{W}\langle AY^T, \nabla^\Sigma d\rangle
\end{eqnarray*}
Moreover notice that
\begin{eqnarray*}
a^k_l N^l = g^{km}a_{ml}N^l= - g^{km}W\langle AY^T, X_*\frac{\partial}{\partial x^m}\rangle
\end{eqnarray*}
and
\begin{eqnarray*}
a_{ik}N^k = -W\langle AY^T, X_*\frac{\partial}{\partial x^i}\rangle.
\end{eqnarray*}
Similarly we have
\begin{eqnarray*}
a^j_k d_j = g^{jm}d_j\langle AX_*\frac{\partial}{\partial x^k}, X_*\frac{\partial}{\partial x^m}\rangle=\langle AX_*\frac{\partial}{\partial x^k}, \nabla^\Sigma d\rangle=\langle A \nabla^\Sigma d, X_*\frac{\partial}{\partial x^k}\rangle.
\end{eqnarray*}
Replacing this above we obtain
\begin{eqnarray*}
& &  g^{ik}\theta_{i;k} =- n\langle \nabla^\Sigma H, \nabla^\Sigma d\rangle - nHW\langle AY^T, \nabla^\Sigma d\rangle -W \langle AY^T, A\nabla^\Sigma d\rangle - \frac{|\nabla u|^2}{W^2} \langle A \nabla^\Sigma d, X_*\frac{\nabla\gamma}{2\gamma}\rangle\\
& &\,\,\,\,  +W\langle AY^T, A\nabla^\Sigma d\rangle +\gamma\langle AY^T, Y^T\rangle\langle \nabla d, \frac{\nabla \gamma}{2\gamma}\rangle- |A|^2 \theta-g^{jl}a^k_{l}d_{j;k}\\
& & \,\,\,\,+g^{ik}(d_{i;kj}+R^l_{jki}d_l-\kappa_k \sigma_{ij})N^j
-g^{ik}(d_{i;j}-\kappa \sigma_{ij})(a^j_{k}-N_k \frac{\gamma^j}{2\gamma})
\end{eqnarray*}
Therefore
\begin{eqnarray*}
& &  g^{ik}\theta_{i;k} =- n\langle \nabla^\Sigma H, \nabla^\Sigma d\rangle - nHW\langle AY^T, \nabla^\Sigma d\rangle  - \frac{|\nabla u|^2}{W^2} \langle A \nabla^\Sigma d, X_*\frac{\nabla\gamma}{2\gamma}\rangle\\
& &\,\,\,\,   +\gamma\langle AY^T, Y^T\rangle\langle \nabla d, \frac{\nabla \gamma}{2\gamma}\rangle- |A|^2 \theta-g^{jl}a^k_{l}d_{j;k}\\
& & \,\,\,\,+g^{ik}(d_{i;kj}+R^l_{jki}d_l-\kappa_k \sigma_{ij})N^j
-g^{ik}(d_{i;j}-\kappa \sigma_{ij})(a^j_{k}-N_k \frac{\gamma^j}{2\gamma})
\end{eqnarray*}
However
\begin{eqnarray*}
g^{ik}\sigma_{ij} =g^{ik}(g_{ij}-\frac{u_i u_j}{\gamma})= \delta^k_j -\frac{1}{W^2}u^ku_j =\delta^k_j -N^k N_j.
\end{eqnarray*}
Hence we have
\begin{eqnarray*}
& &  g^{ik}\theta_{i;k} =- n\langle \nabla^\Sigma H, \nabla^\Sigma d\rangle - nHW\langle AY^T, \nabla^\Sigma d\rangle  - \frac{|\nabla u|^2}{W^2} \langle A \nabla^\Sigma d, X_*\frac{\nabla\gamma}{2\gamma}\rangle\\
& &\,\,\,\,  +\frac{1}{2}\langle AY^T, Y^T\rangle\langle \nabla d, \nabla \gamma\rangle- |A|^2 \theta-2g^{ik}g^{jl}d_{i;j}a_{kl}+\frac{1}{2W^2}d_{i;j}N^i \gamma^j\\
& & \,\,\,\,+g^{ik}d_{i;kj}N^j-\textrm{Ric}(\nabla d, \frac{\nabla u}{W})-\frac{\gamma}{W^2}\langle N, \nabla \kappa\rangle
+\kappa (nH-\gamma\langle AY^T, Y^T\rangle)-\kappa \frac{1}{2W^2}\langle N,  \nabla\gamma\rangle
\end{eqnarray*}
This finishes the proof of the Lemma.
\end{proof}

Using Lemma \ref{hessian-theta} we will obtain an expression for $Lh$. Notice that
\begin{equation*}
h_{i;k} = \alpha d_{i;k} -\phi_i \theta_k -\phi_k \theta_i-\phi_{i;k}\theta -\phi \theta_{i;k}.
\end{equation*}
Moreover it holds that
\begin{eqnarray*}
& & 2g^{ik}\phi_i \theta_k = 2g^{ik}\phi_i \langle A\nabla^\Sigma d, X_*\frac{\partial}{\partial x^k}\rangle -2g^{ik}d_{k;l}\phi_i N^l+2\kappa g^{ik}\sigma_{kl}\phi_i N^l\\
& & \,\,=2\langle A\nabla^\Sigma d, \nabla^\Sigma\phi\rangle -2g^{ik}d_{k;l}\phi_i N^l+2\kappa\frac{\gamma}{W^2}\langle \nabla\phi, N\rangle.
\end{eqnarray*}
We conclude that
\begin{eqnarray*}
& & g^{ik}h_{i;k} =\alpha g^{ik} d_{i;k} +2\langle A\nabla^\Sigma d, \nabla^\Sigma\phi\rangle -2g^{ik}d_{k;l}\phi_i N^l+2\kappa\frac{\gamma}{W^2}\langle \nabla\phi, N\rangle-g^{ik}\phi_{i;k}\theta\\
& &\,\,\,\, + n\phi\langle \nabla^\Sigma H, \nabla^\Sigma d\rangle + n\phi HW\langle AY^T, \nabla^\Sigma d\rangle
+ \frac{|\nabla u|^2}{W^2}\phi \langle A \nabla^\Sigma d, X_*\frac{\nabla\gamma}{2\gamma}\rangle\\
& &\,\,\,\,  -\frac{1}{2}\phi\langle AY^T, Y^T\rangle\langle \nabla d, \nabla \gamma\rangle+ |A|^2 \phi\theta+2g^{ik}g^{jl}d_{i;j}a_{kl}\phi-\frac{1}{2W^2}\phi d_{i;j}N^i \gamma^j\\
& & \,\,\,\,-g^{ik}d_{i;kj}N^j \phi+\textrm{Ric}(\nabla d, \frac{\nabla u}{W})\phi+\frac{\gamma}{W^2}\langle N, \nabla \kappa\rangle\phi
-\kappa (nH-\gamma\langle AY^T, Y^T\rangle)\phi\\
& & \,\,\,\,+\kappa \frac{1}{2W^2}\langle N,  \nabla\gamma\rangle\phi.
\end{eqnarray*}
Now we compute the derivatives with respect to $t$. We have
\begin{eqnarray*}
& &  \theta_t =X_*\frac{\partial}{\partial t}\langle N, \bar\nabla d\rangle = \langle \bar\nabla_{X_*\frac{\partial}{\partial t}}N, \bar\nabla d\rangle + \langle N, \bar\nabla_{X_*\frac{\partial}{\partial t}}\bar\nabla d\rangle \\
& & \,\, =-\langle \nabla^\Sigma (nH-\mathcal{H}), \bar\nabla d\rangle+(nH-\mathcal{H})\langle N, \bar\nabla_{N}\bar\nabla d\rangle.
\end{eqnarray*}
However
\begin{eqnarray*}
\langle N, \bar\nabla_{N}\bar\nabla d\rangle =-\frac{1}{2W^2}\langle \bar\nabla\gamma, \bar\nabla d\rangle+\langle  \frac{\nabla u}{W}, \bar\nabla_{\frac{\nabla u}{W}}\bar\nabla d\rangle.
\end{eqnarray*}
Hence we have
\begin{eqnarray*}
& &  \theta_t =
-\langle \nabla^\Sigma (nH-\mathcal{H}), \bar\nabla d\rangle+(nH-\mathcal{H})(-\frac{1}{2W^2}\langle \nabla\gamma, \nabla d\rangle+\langle  \frac{\nabla u}{W}, \bar\nabla_{\frac{\nabla u}{W}}\bar\nabla d\rangle).
\end{eqnarray*}
Moreover we have
\begin{equation}
d_t = \langle X_* \frac{\partial}{\partial t}, \bar\nabla d\rangle = (nH-\mathcal{H})\langle N,\bar\nabla d\rangle = (nH-\mathcal{H})\theta.
\end{equation}
Therefore
\begin{eqnarray*}
& & h_t = \alpha (nH-\mathcal{H})\theta -(nH-\mathcal{H})\langle N, \bar\nabla \phi\rangle \theta+\phi\langle \nabla^\Sigma (nH-\mathcal{H}), \bar\nabla d\rangle\\
& &\,\,\,\,-\phi(nH-\mathcal{H})(-\frac{1}{2W^2}\langle \nabla\gamma, \nabla d\rangle+\langle  \frac{\nabla u}{W}, \bar\nabla_{\frac{\nabla u}{W}}\bar\nabla d\rangle)
\end{eqnarray*}
We also compute
\begin{eqnarray*}
\langle \nabla \gamma, \nabla h\rangle =\alpha\langle\nabla d, \nabla \gamma\rangle +\phi\langle A\nabla^\Sigma d, X_*\nabla\gamma\rangle -\langle \nabla\phi,\nabla \gamma\rangle \theta -\phi d_{i;j}\gamma^i N^j +\kappa\phi \langle N, \nabla \gamma\rangle.
\end{eqnarray*}
Now we obtain
\begin{eqnarray*}
g^{ik}d_{i;k}= \Delta d -\langle \nabla_{\frac{\nabla u}{W}}\nabla d, \frac{\nabla u}{W}\rangle = -nH_d -\langle \nabla_{\frac{\nabla u}{W}}\nabla d, \frac{\nabla u}{W}\rangle
\end{eqnarray*}
and
\begin{eqnarray*}
& & g^{ik}d_{k;l}\phi_i N^l = d_{k;l}\phi^k N^l -d_{k;l}N^k N^l N^i \phi_i= -\langle \nabla_{\frac{\nabla u}{W}}\nabla d, \nabla \phi\rangle -
\langle \nabla_{\frac{\nabla u}{W}}\nabla d, \frac{\nabla u}{W}\rangle\langle N,\nabla \phi\rangle.
\end{eqnarray*}
Moreover we have
\begin{eqnarray*}
g^{ij}\phi_{i;j} = \Delta \phi -\langle \nabla_{\frac{\nabla u}{W}}\nabla \phi, \frac{\nabla u}{W}\rangle
\end{eqnarray*}
and
\begin{eqnarray*}
& & g^{ik}d_{i;kj}N^j = (\sigma^{ik}d_{i;k})_{;j} N^j -d_{i;kj}N^i N^k N^j  = -n(H_d)_j N^j +\nabla^3 d(\frac{\nabla u}{W},\frac{\nabla u}{W},\frac{\nabla u}{W}).
\end{eqnarray*}
Therefore grouping and rearranging these expressions we obtain
\begin{eqnarray*}
& &  Lh =|A|^2 \phi\theta + n\phi HW\langle AY^T, \nabla^\Sigma d\rangle
+\big(\kappa\gamma-\frac{1}{2}\langle \nabla d, \nabla \gamma\rangle\big)\phi\langle AY^T, Y^T\rangle\\
& &\,\,\,\, +2\langle A\nabla^\Sigma d, \nabla^\Sigma\phi\rangle +2\langle A, \nabla^2 d\rangle_\Sigma\phi-\frac{1}{W^2}\phi\langle A\nabla^\Sigma d, X_*\nabla\gamma\rangle\nonumber\\
& & \,\,\,\,\big(nH-\mathcal{H}\big)\big(\langle N, \nabla \phi\rangle \theta-\alpha\theta-\frac{1}{2W^2}\langle \nabla\gamma, \nabla d\rangle\phi+\langle \nabla_{\frac{\nabla u}{W}}\nabla d, \frac{\nabla u}{W}\rangle\phi\big)-n\kappa H\phi\nonumber\\
& & \,\,\,\,-n\alpha H_d +\big(2\langle N, \nabla\phi\rangle -\alpha\big)\langle \nabla_{\frac{\nabla u}{W}}\nabla d, \frac{\nabla u}{W}\rangle +2\langle \nabla_{\frac{\nabla u}{W}}\nabla d, \nabla \phi\rangle
\nonumber\\
& &\,\,\,\,-\phi \langle \nabla_{\frac{\nabla u}{W}}\nabla d, \frac{\nabla \gamma}{2\gamma}\rangle +\phi\langle \nabla^\Sigma \mathcal{H}, \bar\nabla d\rangle+n\langle \nabla H_d, N\rangle \phi-\phi\nabla^3 d(\frac{\nabla u}{W},\frac{\nabla u}{W},\frac{\nabla u}{W}) +\textrm{Ric}(\nabla d, \frac{\nabla u}{W})\phi\nonumber\\
& & \,\,\,\,+\frac{\gamma}{W^2}\langle N, \nabla \kappa\rangle\phi
-\big(\frac{1}{2\gamma}+\frac{1}{2W^2}\big)\alpha\langle\nabla d, \nabla \gamma\rangle  +\big(\frac{1}{2\gamma}+\frac{1}{2W^2}\big)\langle \nabla\phi,\nabla \gamma\rangle \theta\nonumber \\
& & \,\,\,\, -\kappa\phi \langle N, \frac{\nabla \gamma}{2\gamma}\rangle+2\kappa\frac{\gamma}{W^2}\langle \nabla\phi, N\rangle-\big(\Delta \phi -\langle \nabla_{\frac{\nabla u}{W}}\nabla \phi, \frac{\nabla u}{W}\rangle\big)\theta.\nonumber
\end{eqnarray*}

\begin{lemma}\label{LW-lemma} We have
\begin{eqnarray*}
& & LW-\frac{2}{W}g^{ij}W_i W_j = |A|^2 W +nHW^3\langle AY^T,  Y^T\rangle -nHW^3\langle \frac{\nabla \gamma}{2\gamma^2}, N\rangle-3\gamma\langle A Y^T, X_*\frac{\nabla\gamma}{2\gamma}\rangle
\\
& &\,\,\,\,+ g^{ij}\frac{\gamma_{i;j}}{2\gamma}W - \frac{3}{4}\frac{|\nabla\gamma|^2}{4\gamma^2}W  -\frac{1}{4}\langle \frac{\nabla \gamma}{2\gamma}, N\rangle^2 W+ \gamma W\langle \bar\nabla_{N}\frac{\bar\nabla \gamma}{2\gamma^2}, N\rangle - W\langle \nabla^\Sigma \mathcal{H}, N\rangle \\
& &\,\,\,\,- \frac{|\nabla \gamma|^2}{4\gamma}\frac{1}{W}-W_t.
\end{eqnarray*}
\end{lemma}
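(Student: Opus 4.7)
The plan is a direct computation: differentiate $W^2=\gamma+|\nabla u|^2$ twice covariantly in $P$, feed the result into $L$, and simplify using Killing's equations (\ref{killing-1})--(\ref{killing-2}), the second fundamental form identity (\ref{2nd}), and the mean curvature formula (\ref{Hnonpar}). The time derivative $W_t$ comes for free from (\ref{Wt-nonpar}) combined with the evolution equation $u_t=W(nH-\mathcal{H})$.

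First I would compute $W_i=\frac{\gamma_i}{2W}+\frac{u^j u_{j;i}}{W}$, the same identity used implicitly in Propositions \ref{height-prop} and \ref{boundary-prop}. Squaring and contracting with $g^{ij}=\sigma^{ij}-\frac{u^iu^j}{W^2}$ gives $\frac{2}{W}g^{ij}W_iW_j$; once combined with the drift term $-(\frac{1}{2\gamma}+\frac{1}{2W^2})\gamma^iW_i$ from $L$, the purely first-order-in-$\gamma$ contributions should collapse into the three $|\nabla\gamma|^2$ pieces and the $-\frac{1}{4}\langle\frac{\nabla\gamma}{2\gamma},N\rangle^2W$ term in the final formula, after converting $u^i$-contractions into the tangential projection $Y^T$ of $Y$ along $\Sigma_t$.

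Differentiating a second time produces $W_{i;j}$, which contains $u^k u_{k;ij}$. Third derivatives of $u$ are handled in two steps. First, by the Ricci identity on $P$, $u_{k;ij}=u_{k;ji}+R^\ell{}_{kij}u_\ell$; taking the trace $g^{ij}$ and using that the slices $\vartheta(s,\cdot)$ are totally geodesic (so that $\bar R$ and $R_P$ match up to Killing terms) generates the ambient curvature contribution, which after pairing with the identity $a^m_iN_m=-\frac{\gamma}{W}\langle AY^T,X_*\frac{\partial}{\partial x^i}\rangle$ used in the proof of Lemma \ref{hessian-theta} becomes $\gamma W\langle\bar\nabla_N\frac{\bar\nabla\gamma}{2\gamma^2},N\rangle$. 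Second, the contracted third derivative $g^{ij}u^k u_{k;ij}$ is replaced, via the tangential gradient of (\ref{Hnonpar}), by $nHW^3$-type expressions; this is the source of $nHW^3\langle AY^T,Y^T\rangle$ and $-nHW^3\langle\frac{\nabla\gamma}{2\gamma^2},N\rangle$. The quadratic-in-Hessian piece $g^{ik}g^{j\ell}u_{i;j}u_{k;\ell}/W$ becomes $|A|^2W$ after substituting (\ref{2nd}).

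The $W_t$ contribution is immediate from (\ref{Wt-nonpar}) and differentiation of (\ref{eqtn-ut}), which supplies the $-W\langle\nabla^\Sigma\mathcal{H},N\rangle$ term. The principal obstacle is organizational rather than conceptual: every contraction with $u^i$ or $\gamma^i$ must be converted into a quantity intrinsic to $\Sigma_t$ --- namely $Y^T$, $N$, $\nabla^\Sigma$, or $A$ --- and the coefficient $3$ in $-3\gamma\langle AY^T,X_*\frac{\nabla\gamma}{2\gamma}\rangle$ emerges only after three separate contributions (from $g^{ij}W_{i;j}$, from the drift, and from the cross term $g^{ij}u_{k;i}\gamma^k u_j/W^2$) are added. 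Because the intermediate expressions mix $P$- and $M$-objects, the most error-prone step will be the passage from $P$-covariant derivatives of $\gamma$ to $M$-covariant derivatives, mediated by (\ref{killing-2}) and the tangential/normal splitting $Y=Y^T+\frac{1}{W}N$.
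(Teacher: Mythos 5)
Your route is not the paper's: there $W$ is differentiated extrinsically through $W=1/\langle Y,N\rangle$, giving $W_i=\frac{\gamma_i}{2\gamma}W+N_iW^3\langle \frac{\nabla\gamma}{2\gamma^2},N\rangle+W^2\langle AY^T,X_*\frac{\partial}{\partial x^i}\rangle$, and the second derivatives are then handled with the Weingarten equation, Codazzi, and the Appendix identity (\ref{nabla-nabla}) relating $\nabla_k b_{ij}$ to $\nabla^\Sigma_k a_{ij}$; no third derivatives of $u$ ever appear. Your intrinsic plan (differentiate $W^2=\gamma+|\nabla u|^2$ on $P$, commute third derivatives, use the mean curvature formula) could in principle be made to work, but as written it has two genuine gaps. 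First, the curvature bookkeeping is misattributed: the term $\gamma W\langle\bar\nabla_N\frac{\bar\nabla\gamma}{2\gamma^2},N\rangle$ does not come from the Ricci identity; in the paper it arises from differentiating the factor $\langle\frac{\bar\nabla\gamma}{2\gamma^2},N\rangle$ in $W_i$ along $\Sigma_t$, i.e.\ from second derivatives of $\gamma$, and the identity $a^m_iN_m=-\frac{\gamma}{W}\langle AY^T,X_*\frac{\partial}{\partial x^i}\rangle$ plays no role there. What the Ricci identity on $P$ actually produces is a term of type $\frac{1}{W}\mathrm{Ric}(\nabla u,\nabla u)$, and the stated right-hand side contains no curvature at all; explaining how that term cancels or is absorbed is exactly the delicate point, and your sketch is silent on it. Note that in the companion computation (Lemma \ref{hessian-theta}) a $\mathrm{Ric}$ term does survive to the final formula, so this cannot be waved through.

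Second, your account of the $\mathcal H$ and $W_t$ terms is inconsistent with the identity being proved. In the statement $-W_t$ is simply the $-v_t$ part of $L$ and is left unexpanded; the term $-W\langle\nabla^\Sigma\mathcal H,N\rangle$ originates, in the paper, from the divergence-of-$A$ term $-nWN^l\nabla^\Sigma_l H$ produced by Codazzi, not from the time derivative. If you instead differentiate the evolution equation (\ref{eqtn-ut}) as you propose, the piece $\frac{1}{W}u^k u_{t;k}$ you generate equals $W_t$ by (\ref{Wt-nonpar}) and cancels the $-W_t$ of $L$, so your bookkeeping would not reproduce a formula containing simultaneously $-W_t$, the $nHW^3$ terms and the $\nabla^\Sigma\mathcal H$ term; to land on the stated form you must reduce the third derivatives with the static identity (\ref{Hnonpar}) (no flow used), obtain the $n\nabla^\Sigma H$ contribution, and then justify the passage from $n\nabla^\Sigma H$ to $\nabla^\Sigma\mathcal H$, none of which appears in the proposal. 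Beyond these two points, the decisive cancellations (the coefficient $3$ in $-3\gamma\langle AY^T,X_*\frac{\nabla\gamma}{2\gamma}\rangle$, the $\frac34$ and $\frac14$ coefficients) are asserted rather than derived, so the attempt cannot be accepted as a proof of the lemma.
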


\begin{proof} Notice that
\begin{eqnarray*}
& & W_i = -W^2\big( \langle \bar\nabla_{X_*\frac{\partial}{\partial x^i}} Y, N\rangle +\langle Y, \bar\nabla_{X_*\frac{\partial}{\partial x^i}}N\rangle\big)\\
& & \,\,= -W^2\big( \langle \bar\nabla_{\frac{\partial}{\partial x^i}} Y, N\rangle+u_i\langle \bar\nabla_{Y} Y, N\rangle -\langle Y, AX_*\frac{\partial}{\partial x^i}\rangle\big)\\
& & \,\,= -W^2\big( -\frac{\gamma_i}{2\gamma}\langle Y, N\rangle+u_i\langle \frac{\nabla \gamma}{2\gamma^2}, N\rangle -\langle Y, AX_*\frac{\partial}{\partial x^i}\rangle\big).
\end{eqnarray*}
Therefore
\begin{eqnarray*}
& & W_i =\frac{\gamma_i}{2\gamma}W+N_iW^3\langle \frac{\nabla \gamma}{2\gamma^2}, N\rangle +W^2\langle A Y^T, X_*\frac{\partial}{\partial x^i}\rangle.
\end{eqnarray*}
However
\begin{eqnarray*}
& & \langle A Y^T, X_*\frac{\partial}{\partial x^i}\rangle =g^{kl}\langle Y, X_*\frac{\partial}{\partial x^k}\rangle\langle  X_*\frac{\partial}{\partial x^l}, A X_*\frac{\partial}{\partial x^i}\rangle =
g^{kl}\langle Y, u_k Y\rangle b_{il}=\frac{1}{W^2}u^l b_{il}.
\end{eqnarray*}
Hence it follows that
\begin{eqnarray*}
& & W_i =\frac{\gamma_i}{2\gamma}W+N_iW^3\langle \frac{\nabla \gamma}{2\gamma^2}, N\rangle - WN^l b_{il}.
\end{eqnarray*}
Hence we obtain
\begin{eqnarray*}
& &\frac{1}{W} g^{ij}W_i W_j =\frac{|\nabla^\Sigma \gamma|^2}{4\gamma^2}W+W\langle \nabla\gamma, N\rangle \langle \frac{\nabla \gamma}{2\gamma^2}, N\rangle +\langle A Y^T, \frac{\nabla^\Sigma\gamma}{\gamma}\rangle W^2\\
& &\,\,\,\,+\gamma|\nabla u|^2 \langle \frac{\nabla \gamma}{2\gamma^2}, N\rangle^2 W-\langle A Y^T, Y^T\rangle \langle \frac{\nabla \gamma}{\gamma}, N\rangle W^3+ \langle A Y^T, A Y^T\rangle W^3
\end{eqnarray*}
Now we compute
\begin{eqnarray*}
& & W_{i;j} = \big(\frac{\gamma_{i;j}}{2\gamma}-\frac{\gamma_i \gamma_j}{2\gamma^2}\big)W+\frac{\gamma_i}{2\gamma}W_j+N_{i;j}W^3\langle \frac{\nabla \gamma}{2\gamma^2}, N\rangle+3N_{i}W^2W_j\langle \frac{\nabla \gamma}{2\gamma^2}, N\rangle\\
& & \,\,\,\,+N_{i}W^3\big(\langle \bar\nabla_{X_*\frac{\partial}{\partial x^j}}\frac{\bar\nabla \gamma}{2\gamma^2}, N\rangle-\langle \frac{\nabla \gamma}{2\gamma^2}, AX_*\frac{\partial}{\partial x^j}\rangle\big) - W_j N^l b_{il} - WN^l_{;j} b_{il} - WN^l b_{il;j}.
\end{eqnarray*}
However we have
\begin{eqnarray*}
& & g^{ij}\frac{\gamma_i}{2\gamma}W_j = \frac{|\nabla^\Sigma\gamma|^2}{4\gamma^2}W+\langle \frac{\nabla \gamma}{2\gamma}, N\rangle^2 W+W^2\langle A Y^T, \frac{\nabla^\Sigma\gamma}{2\gamma}\rangle
\end{eqnarray*}
and
\begin{eqnarray*}
& & g^{ij}N_{i;j} = g^{ij}\sigma_{ik}N^k_{;j}= -(\delta^j_k -N^j N_k)(a^k_{j}-N_j \frac{\gamma^k}{2\gamma})\\
& & \,\,= -nH+\frac{\gamma}{W^2}\langle N,\frac{\nabla\gamma}{2\gamma}\rangle+\gamma\langle AY^T, Y^T\rangle.
\end{eqnarray*}
Moreover we compute
\begin{eqnarray*}
g^{ij}N_i W_j =\frac{\gamma}{W}\langle N,\frac{\nabla\gamma}{2\gamma}\rangle  +\frac{\gamma|\nabla u|^2}{W} \langle \frac{\nabla \gamma}{2\gamma^2}, N\rangle-\gamma W\langle A Y^T, Y^T\rangle
\end{eqnarray*}
and
\begin{eqnarray*}
& & g^{ij}N_{i}W^3\big(\langle \bar\nabla_{X_*\frac{\partial}{\partial x^j}}\frac{\bar\nabla \gamma}{2\gamma^2}, N\rangle-\langle \frac{\bar\nabla \gamma}{2\gamma^2}, AX_*\frac{\partial}{\partial x^j}\rangle\big)
\\
& & \,\,=\gamma W\big(\langle \bar\nabla_{N-WY}\frac{\bar\nabla \gamma}{2\gamma^2}, N\rangle-\langle A\frac{\nabla^\Sigma \gamma}{2\gamma^2}, -W Y\rangle\big)\\
& &\,\, =\gamma W\langle \bar\nabla_{N}\frac{\bar\nabla \gamma}{2\gamma^2}, N\rangle  + W\frac{|\nabla \gamma|^2}{4\gamma^2} + \gamma W^2
\langle A\frac{\nabla^\Sigma \gamma}{2\gamma^2}, Y^T\rangle.
\end{eqnarray*}
We also have
\begin{eqnarray*}
& & 2Wg^{ij}W_j\langle A Y^T, X_*\frac{\partial}{\partial x^i}\rangle  = 2W^2\langle A Y^T, \frac{\nabla^\Sigma\gamma}{2\gamma}\rangle - W^3\langle \frac{\nabla \gamma}{\gamma}, N\rangle\langle A Y^T, Y^T\rangle +2W^3 \langle A Y^T, AY^T\rangle.
\end{eqnarray*}
Now we compute
\begin{eqnarray*}
& & g^{ij}WN^l b_{il;j} = WN^l g^{ij}\nabla^\Sigma_j a_{il}+\frac{1}{\gamma}W^2 g^{ij} a_{ij}a^m_l N^l u_m +\frac{1}{\gamma}W^2 g^{ij}  a_{lj} N^l a^m_i u_m \\
& & \,\,\,\,+W g^{ij}\frac{u_i u_j}{2\gamma^2}a_{lm}N^l\gamma^m +W g^{ij} N^l\frac{u_l u_j}{2\gamma^2}a_{im}\gamma^m.
\end{eqnarray*}
Hence we have
\begin{eqnarray*}
& & g^{ij}WN^l b_{il;j} = WN^l (n\nabla^\Sigma_l H+g^{ij}\langle \bar R(X_*\frac{\partial}{\partial x^i}, X_*\frac{\partial}{\partial x^j})N, X_*\frac{\partial}{\partial x^l}\rangle)+nHW^2 \langle AY^T, N^k X_*\frac{\partial}{\partial x^k}\rangle \\
& & \,\,\,\,+W^2 g^{ij} \langle AY^T, X_*\frac{\partial}{\partial x^j}\rangle\big(-W\langle AY^T, X_*\frac{\partial}{\partial x^i}\rangle\big) \\
& & \,\,\,\,-|\nabla u|^2\langle AY^T, X_*\frac{\nabla \gamma}{2\gamma}\rangle + \frac{|\nabla u|^2}{2\gamma W}(-W\langle AY^T, X_*\frac{\partial}{\partial x^m}\rangle)\gamma^m.
\end{eqnarray*}
Therefore
\begin{eqnarray*}
& & g^{ij}WN^l b_{il;j} = nWN^l\nabla^\Sigma_l H - nHW^3 \langle AY^T, Y^T\rangle -W^3  \langle AY^T,  AY^T\rangle-|\nabla u|^2\langle AY^T, X_*\frac{\nabla \gamma}{\gamma}\rangle.
\end{eqnarray*}
Moreover
\begin{eqnarray*}
& & g^{ij}W_j N^l b_{il} =-W^2\langle AY^T, \frac{\nabla^\Sigma \gamma}{2\gamma}\rangle + W^3\langle \frac{\nabla \gamma}{2\gamma}, N\rangle \langle AY^T, Y^T\rangle -W^3\langle AY^T, AY^T\rangle
\end{eqnarray*}
and
\begin{eqnarray*}
& & W g^{ij} N^l_{;j} b_{il} = -Wg^{ij}(a^l_j-N_j \frac{\gamma^l}{2\gamma}) a_{il} = -|A|^2W - \frac{1}{2}\langle AY^T, X_*\nabla \gamma\rangle.
\end{eqnarray*}
We conclude that
\begin{eqnarray*}
& & g^{ij}W_{i;j} =  |A|^2 W + 2W^3  \langle AY^T,  AY^T\rangle +\big(nH-3\langle \frac{\nabla \gamma}{2\gamma}, N\rangle\big)W^3\langle AY^T,  Y^T\rangle  \\
& & \,\,\,\,+3W^2\langle A Y^T, \frac{\nabla^\Sigma\gamma}{2\gamma}\rangle+|\nabla u|^2\langle AY^T, X_*\frac{\nabla \gamma}{\gamma}\rangle + \frac{1}{2}\langle AY^T, X_*\nabla \gamma\rangle \\
& &\,\,\,\,+ g^{ij}\frac{\gamma_{i;j}}{2\gamma}W - \frac{|\nabla^\Sigma\gamma|^2}{4\gamma^2}W + \frac{|\nabla \gamma|^2}{4\gamma^2}W +\big(5W+3\frac{W}{\gamma}|\nabla u|^2\big)\langle \frac{\nabla \gamma}{2\gamma}, N\rangle^2  \\
& & \,\,\,\, -nHW^3\langle \frac{\nabla \gamma}{2\gamma^2}, N\rangle + \gamma W\langle \bar\nabla_{N}\frac{\bar\nabla \gamma}{2\gamma^2}, N\rangle -nWN^l \nabla^\Sigma_l H.
\end{eqnarray*}
Now
\begin{eqnarray*}
& & \langle \nabla\gamma, \nabla W\rangle = \frac{|\nabla \gamma|^2}{2\gamma}W + \frac{1}{2\gamma^2}\langle \nabla\gamma, N\rangle^2 W^3 + W^2\langle AY^T, X_*\nabla\gamma\rangle.
\end{eqnarray*}
Hence
\begin{eqnarray*}
& & LW-\frac{2}{W}g^{ij}W_i W_j = |A|^2 W +\big(nH+\langle \frac{\nabla \gamma}{2\gamma}, N\rangle\big)W^3\langle AY^T,  Y^T\rangle  \\
& & \,\,\,\,-W^2\langle A Y^T, \frac{\nabla^\Sigma\gamma}{2\gamma}\rangle+|\nabla u|^2\langle AY^T, X_*\frac{\nabla \gamma}{\gamma}\rangle + \frac{1}{2}\langle AY^T, X_*\nabla \gamma\rangle \\
& & \,\,\,\, -\big(\frac{1}{2\gamma}+\frac{1}{2W^2}\big)W^2\langle AY^T, X_*\nabla\gamma\rangle
\\
& &\,\,\,\,+ g^{ij}\frac{\gamma_{i;j}}{2\gamma}W - \frac{3}{4}\frac{|\nabla^\Sigma\gamma|^2}{4\gamma^2}W + \frac{|\nabla \gamma|^2}{4\gamma^2}W +\big(5W+3\frac{W}{\gamma}|\nabla u|^2\big)\langle \frac{\nabla \gamma}{2\gamma}, N\rangle^2  \\
& & \,\,\,\, -nHW^3\langle \frac{\nabla \gamma}{2\gamma^2}, N\rangle + \gamma W\langle \bar\nabla_{N}\frac{\bar\nabla \gamma}{2\gamma^2}, N\rangle -nWN^l \nabla^\Sigma_l H \\
& &\,\,\,\,-\big(\frac{1}{2\gamma}+\frac{1}{2W^2}\big) \big(\frac{|\nabla \gamma|^2}{2\gamma}W + \frac{1}{2\gamma^2}\langle \nabla\gamma, N\rangle^2 W^3\big) -\frac{1}{\gamma^2}\langle \nabla\gamma, N\rangle^2 W -2\gamma|\nabla u|^2 \langle \frac{\nabla \gamma}{2\gamma^2}, N\rangle^2 W-W_t.
\end{eqnarray*}
However
\begin{eqnarray*}
3\frac{W}{\gamma}|\nabla u|^2\langle \frac{\nabla \gamma}{2\gamma}, N\rangle^2 -2\gamma|\nabla u|^2 \langle \frac{\nabla \gamma}{2\gamma^2}, N\rangle^2 W =\frac{W}{\gamma}|\nabla u|^2\langle \frac{\nabla \gamma}{2\gamma}, N\rangle^2
\end{eqnarray*}
and
\begin{eqnarray*}
5W \langle \frac{\nabla \gamma}{2\gamma}, N\rangle^2 -\frac{1}{\gamma^2}\langle \nabla\gamma, N\rangle^2 W =  \langle \frac{\nabla\gamma}{2\gamma}, N\rangle^2 W
\end{eqnarray*}
and
\begin{eqnarray*}
& & -W^2\langle A Y^T, \frac{\nabla^\Sigma\gamma}{2\gamma}\rangle + |\nabla u|^2\langle AY^T, X_*\frac{\nabla \gamma}{\gamma}\rangle + \frac{1}{2}\langle AY^T, X_*\nabla \gamma\rangle -\big(\frac{1}{2\gamma}+\frac{1}{2W^2}\big)W^2\langle AY^T, X_*\nabla\gamma\rangle\\
& & \,\,=-3\gamma\langle A Y^T, X_*\frac{\nabla\gamma}{2\gamma}\rangle -W^3\langle\frac{\nabla \gamma}{2\gamma},N\rangle \langle  A Y^T, Y^T\rangle.
\end{eqnarray*}
Moreover we compute
\begin{eqnarray*}
& & \big(\frac{1}{2\gamma}+\frac{1}{2W^2}\big) \big(\frac{|\nabla \gamma|^2}{2\gamma}W + \frac{1}{2\gamma^2}\langle \nabla\gamma, N\rangle^2 W^3\big) \\
& & \,\, = \frac{|\nabla \gamma|^2}{4\gamma^2}W + \frac{1}{\gamma}W^3\langle \frac{\nabla\gamma}{2\gamma}, N\rangle^2 + \frac{|\nabla \gamma|^2}{4\gamma}\frac{1}{W}+ \langle \frac{\nabla\gamma}{2\gamma}, N\rangle^2 W
\end{eqnarray*}
and
\begin{eqnarray*}
& & -nWN^l \nabla^\Sigma_l H  =  -nW\langle\nabla^\Sigma H, N\rangle = - W\langle \nabla^\Sigma \mathcal{H}, N\rangle.
\end{eqnarray*}
We conclude that
\begin{eqnarray*}
& & LW-\frac{2}{W}g^{ij}W_i W_j = |A|^2 W +nHW^3\langle AY^T,  Y^T\rangle -3\gamma\langle A Y^T, X_*\frac{\nabla\gamma}{2\gamma}\rangle
\\
& &\,\,\,\,+ g^{ij}\frac{\gamma_{i;j}}{2\gamma}W - \frac{3}{4}\frac{|\nabla^\Sigma\gamma|^2}{4\gamma^2}W  +\frac{|\nabla u|^2}{\gamma}\langle \frac{\nabla \gamma}{2\gamma}, N\rangle^2 W\\
& & \,\,\,\, -nHW^3\langle \frac{\nabla \gamma}{2\gamma^2}, N\rangle + \gamma W\langle \bar\nabla_{N}\frac{\bar\nabla \gamma}{2\gamma^2}, N\rangle - W\langle \nabla^\Sigma \mathcal{H}, N\rangle \\
& &\,\,\,\,- \frac{1}{\gamma}W^3\langle \frac{\nabla\gamma}{2\gamma}, N\rangle^2 - \frac{|\nabla \gamma|^2}{4\gamma}\frac{1}{W}-W_t.
\end{eqnarray*}
However
\begin{eqnarray*}
\frac{|\nabla u|^2}{\gamma}\langle \frac{\nabla \gamma}{2\gamma}, N\rangle^2 W - \frac{1}{\gamma}W^3\langle \frac{\nabla\gamma}{2\gamma}, N\rangle^2= -\langle \frac{\nabla \gamma}{2\gamma}, N\rangle^2 W
\end{eqnarray*}
and
\begin{eqnarray*}
& & - \frac{3}{4}\frac{|\nabla^\Sigma\gamma|^2}{4\gamma^2}W  -\langle \frac{\nabla \gamma}{2\gamma}, N\rangle^2 W = - \frac{3}{4}\frac{|\nabla\gamma|^2}{4\gamma^2}W + \frac{3}{4}\langle\frac{\nabla\gamma}{2\gamma}, N\rangle^2W -\langle \frac{\nabla \gamma}{2\gamma}, N\rangle^2 W \\
& & \,\,= - \frac{3}{4}\frac{|\nabla\gamma|^2}{4\gamma^2}W  -\frac{1}{4}\langle \frac{\nabla \gamma}{2\gamma}, N\rangle^2 W.
\end{eqnarray*}
Hence we obtain
\begin{eqnarray*}
& & LW-\frac{2}{W}g^{ij}W_i W_j = |A|^2 W +nHW^3\langle AY^T,  Y^T\rangle -nHW^3\langle \frac{\nabla \gamma}{2\gamma^2}, N\rangle-3\gamma\langle A Y^T, X_*\frac{\nabla\gamma}{2\gamma}\rangle
\\
& &\,\,\,\,+ g^{ij}\frac{\gamma_{i;j}}{2\gamma}W - \frac{3}{4}\frac{|\nabla\gamma|^2}{4\gamma^2}W  -\frac{1}{4}\langle \frac{\nabla \gamma}{2\gamma}, N\rangle^2 W+ \gamma W\langle \bar\nabla_{N}\frac{\bar\nabla \gamma}{2\gamma^2}, N\rangle - W\langle \nabla^\Sigma \mathcal{H}, N\rangle \\
& &\,\,\,\,- \frac{|\nabla \gamma|^2}{4\gamma}\frac{1}{W}-W_t.
\end{eqnarray*}
This finishes the proof of the lemma.
\end{proof}

\end{document}